\newcommand{\blue}[1]{{#1}}
\newcommand{\orange}[1]{{#1}}
\def\@fnsymbol#1{\ensuremath{\ifcase#1\or \dagger\or \ddagger\or
		\mathsection\or \mathparagraph\or \|\or **\or \dagger\dagger
		\or \ddagger\ddagger \else\@ctrerr\fi}}
\newcommand*\samethanks[1][\value{footnote}]{\footnotemark[#1]}
\begin{document}

\title{Online Primal-Dual Algorithms For Stochastic Resource Allocation Problems}

\author[1]{Yuwei Chen \thanks{Equal contribution.}}
\author[1]{Zengde Deng \samethanks }
\author[1]{Zaiyi Chen}
\author[1]{Yinzhi Zhou}
\author[1]{Yujie Chen}
\author[1]{Haoyuan Hu}
\affil[1]{Cainiao Network, Hangzhou, Zhejiang, China}

\date{}
\maketitle


\begin{abstract}

This paper studies the online stochastic resource allocation problem~(RAP) with chance constraints and conditional expectation constraints. 
The online RAP is an integer linear programming problem where resource consumption coefficients are revealed column by column along with the corresponding revenue coefficients. 
When a column is revealed, the corresponding decision variables are determined instantaneously without future information.
In online applications, the resource consumption coefficients are often obtained by prediction.
An application for such scenario rises from the online order fulfilment task. When the timeliness constraints are considered, the coefficients are generated by the prediction for the transportation time from origin to destination. To model their uncertainties, we take the chance constraints and conditional expectation constraints into the consideration.
Assuming that the uncertain variables have known Gaussian distributions, the stochastic RAP can be transformed into a deterministic but nonlinear problem with integer second-order cone constraints. 
Next, we linearize this nonlinear problem and theoretically analyze the performance of vanilla online primal-dual algorithm for solving the linearized stochastic RAP. Under mild technical assumptions, the optimality gap and constraint violation are both on the order of $\sqrt{n}$.
Then, to further improve the performance of the algorithm, several modified online primal-dual algorithms with heuristic corrections are proposed. 
Finally, extensive numerical experiments demonstrate the applicability and effectiveness of our methods.
\end{abstract}



\section{Introduction}
\label{sec:intro}
\subsection{Background}
The resource allocation problem~(RAP) \cite{asadpour2020online} is to find the best allocation of a fixed amount of resources to various activities, in order to maximize the total revenue.
The online RAP has a wide range of applications such as network routing \cite{buchbinder2009online}, computer resource allocation \cite{kurose1989microeconomic}, Internet advertising \cite{mehta2007adwords}, portfolio selection \cite{ida2003portfolio} and so on. \blue{This paper studies a multi-dimensional online RAP with uncertainty.
There are $m$ resources and $k$ resource consumption schemes for each request. The request for the resources arrives one by one. When the $i$-th request is revealed, one or none of $k$ resource consumption schemes is chosen to satisfy this request. 
If $l$-th resource consumption scheme is chosen, the revenue and the consumption of the $j$-th resource are $c_{tl}$ and $a_{tjl}$ respectively. 
The decision is irrevocable and has to be decided immediately according to the historical information $\{(\bm c_\tau, \bm A_\tau)\}_{\tau=1}^t$, without future information. 
Our aim is to maximize the total revenue with limited resource capacities, given the total number $n$ of incoming requests and considering the uncertainty of $a_{tjl}$.}

\blue{This paper models the uncertainty with chance constraints and conditional expectation constraints.}
On one hand, a number of studies such as \cite{lu2021non, zhang2015resource, xu2016energy, cooper2004chance} have applied the offline chance constrained resource allocation model into engineering fields. \blue{Some of these fields are better suited for applying online models. Taking \cite{lu2021non} as an example, Lu et al. studied a offline planning model for selecting non-profit operations in a hospital considering the uncertainty of resource consumption. It is an online problem in practice since the decision must be made immediately when a operation request is arrived. 
On the other hand, the online chance constrained resource allocation model can be applied to the order fulfillment task, which is an important application in the field of the supply chain. 
In the order fulfillment problem, each order needs to be allocated to a transportation channel which will be delivered from the origin to the destination \cite{ricker1999order}. The goal is to maximize the total revenue while ensuring the average transportation time does not suppress the given threshold. We refer \orange{this average transportation time constraint} as the timeliness constraint. \orange{A long delivery time highly impairs the customer's experience. Consequently, timeliness constraint upper bound are often adopted to guarantee the relatively low average transportation time.} Due to the complicacy of the transportation process, the transportation time of each channel is attained by prediction and has uncertainty.
}\orange{As far as we know, this is the first work to take both chance constraints and conditional expectation constraints into the consideration for the online RAP.}

\subsection{Related Works}
The deterministic RAP can be modeled as an integer linear programming~(ILP) problem. 
Many recent papers have studied the online ILP problems (see \cite{molinaro2014geometry, li2019online, agrawal2014dynamic, gupta2014experts, chen2015dynamic, balseiro2020best, gao2021boosting, li2020simple} and references therein). \blue{Algorithms in \cite{molinaro2014geometry, li2019online, agrawal2014dynamic, gupta2014experts, chen2015dynamic, balseiro2020best, gao2021boosting, li2020simple} are all dual-based which maintain dual prices in iterations and can achieve near-optimal solutions under mild conditions. 
When a new request arrives, the decision is made immediately based on the dual price vector.
Among these studies, researchers \cite{molinaro2014geometry, li2019online, agrawal2014dynamic, gupta2014experts, chen2015dynamic} construct dual problems by using historical information and solving them to obtain the dual prices. To deal with the disadvantage that solving dual problems may be time-consuming, researchers \cite{balseiro2020best, gao2021boosting, li2020simple} 
propose online primal-dual~(OPD) algorithms that update the dual prices by utilizing the dual mirror descent or projected stochastic subgradient descent.
It is worth to mention that these OPD algorithms are simple and efficient since the dual prices are updated through algebraic computation without solving optimization problems. 
}

However, the optimization models studied in \cite{molinaro2014geometry, li2019online, agrawal2014dynamic,gupta2014experts, chen2015dynamic, gao2021boosting, kesselheim2014primal,li2020simple,balseiro2020best} are deterministic and may suffer from poor performance when the resource consumption is uncertain in practice.
In the existing articles that study the uncertain online optimization, the uncertainty is modeled by the worst-case scenario value, expectation, regret, or a linear combination of the above (see \cite{bent2004online, bent2005online, jiang2020online, liu2015averaging, jiang2020online2}).
These modeling methods are mainly aimed at the uncertainty in the objective function, while almost no chance constraint or conditional expectation constraint is considered in the existing studies.

Chance constrained programming \cite{charnes1959chance} is a widely used stochastic programming technique \blue{to model the uncertainty in constraints}.
In stochastic programming \cite{ruszczynski2003stochastic}, it is assumed that some parameters are uncertain and their distributions are known.
If the uncertain parameters in an active inequality constraint are set to the medians, the probability of this constraint not holding is 50\%. 
To avoid this issue in the online RAP, this paper adopts the chance constraints to model the uncertainty. The chance constraint is the constraint on the uncertain parameters whose holding probability is not lower than the prescribed level. 
The solution methods for chance constrained programming~(CCP) have been studied by \cite{charnes1959chance, jagannathan1974chance, charnes1963deterministic, prekopa1973contributions}. 
If the uncertain parameters have a known multivariate Gaussian distribution, the chance constrained counterparts of linear constraints can be transformed into deterministic second-order cone (SOC) constraints. 

The conditional expectation constraint \cite{prekopa1973contributions} can be served as a supplement to chance constraints, since the chance constraint does not restrict the amount of the violation of inequality constraints directly. In some cases, the expected amount of violation can be large even if the chance constraints are satisfied.
To address this concern, the conditional expectation constraint is also taken into consideration in this paper.
Similar to chance constraints, the conditional expectation constraints can also be transformed into deterministic SOC constraints under the assumption of multivariate Gaussian distribution \cite{prekopa1973contributions}. 

\subsection{Main Contributions}
This paper studies the online stochastic RAP considering the uncertainty of resource consumption coefficients. 
The chance constraint and conditional expectation constraint are used to model the uncertainty and both of them can be transformed into the SOC constraints equivalently.
The non-linearity and indecomposability of the SOC constraints make the online problem challenging to handle.
The main contributions of this paper are as follows.

\begin{enumerate}
	\item[(1)] To the best of our knowledge, this is the first time chance constraints and conditional expectation constraints are introduced in the online RAP. A linearization method is presented to transform the SOC constrained problem into a linear form suitable for the online solution.
	\item[(2)] We theoretically analyze the performance of the vanilla OPD algorithm when it is applied to solve the linearized stochastic RAP. Under mild technical assumptions, the expected optimality gap and constraint violation are both $O(\sqrt{n})$.
	\item[(3)] We propose modified versions of the OPD approach by leveraging the structure of the SOC constraints to effectively reduce the probability deviation of chance constraints and the constraint violation of conditional expectation constraints \blue{in practice}.
	\item[(4)] Massive numerical experiments are conducted to demonstrate the applicability and effectiveness of the proposed algorithms.
\end{enumerate} 

 The rest of this paper is organized as follows. 
 Section \ref{model} formulates the stochastic programming model of RAP and its linear relaxation.
 Section \ref{algorithm} introduces the proposed heuristic corrections and modified OPD algorithms for solving stochastic RAPs.
 Section \ref{exper} gives the numerical experiment results. 
 Finally in Section \ref{conclusion}, conclusions are drawn.

\section{Model Description}
\label{model}

This section first formulates the deterministic model of the RAP. 
Then, a stochastic counterpart with chance constraints and conditional expectation constraints is established. Finally, the stochastic programming problem is relaxed into an integer linear problem suitable for online solution.

\subsection{Deterministic Problem}
Consider the multi-dimensional RAP with $n$ requests and $m$ resources.
For each request, there are always $k$ resource consumption schemes that can satisfy it. 
When a request is revealed, the decision maker chooses one scheme or none.
Without loss of generality, a deterministic multi-dimensional RAP can be modeled as follows:

\begin{equation}\label{prob:deterministic}
	\begin{aligned}
		\max_{\bm{x}}\quad &\sum_{t=1}^n \bm{c}_t^{\top} \bm{x}_t \\
		\rm{s.t.}\quad&\sum_{t=1}^n \bm{a}_{tj}^{\top} \bm{x}_t \le b_j, \forall j = 1,\dots,m\\
		&\bm{1}^{\top} \bm{x}_t \le 1, \bm{x}_t \in \{0,1\}^k, \forall t = 1,\dots,n
	\end{aligned}
\end{equation}
where the revenue coefficient vector $\bm{c}_t \in \mathbb{R}^k$, and the resource consumption vector $\bm{a}_{tj}\in \mathbb{R}^k$. The decision variables are $\bm{x} = (\bm{x}_1,\dots,\bm{x}_n)^\top$. 
$x_{tl} = 1$ means that $t$-th request is satisfied by resource consumption scheme $l$.
$b_j$ is the capacity of resource $j$. $\bm{1}$ denotes all-one vector.
In the online setting of ILP, the input data $(\bm{c}_t,\bm{a}_{t1},\dots,\bm{a}_{tm})$ is revealed one by one and $\bm{x}_t$ is determined instantaneously when $(\bm{c}_t,\bm{a}_{t1},\dots,\bm{a}_{tm})$ is revealed.

Throughout this paper, we assume that the input data of deterministic problem and stochastic programming problem is drawn i.i.d.~from some distributions. Moreover, $n$ and $\bm b$ are assumed to be known and fixed.

\subsection{Stochastic Programming Problem}
In practice, the value of $\bm{a}_{tj}$ may be obtained by predicting that yields the uncertainty.
Consequently, taken the uncertainty of $\bm{a}_{tj}$ into consideration, we formulate the following chance constraints:
\begin{equation}\label{eq:cc}
	\mathbb{P}_{\bm a_{tj}\sim \mathcal P_{a}}\left(\sum_{t=1}^n \bm{a}_{tj}^{\top} \bm{x}_t \le b_j\right)\ge \eta_j,\forall j = 1,\dots,m,
\end{equation}
where $\mathbb{P}$ means probability, $\eta_j$ is the given confidence level and $\mathcal P_a$ is the distribution of $\bm a_{tj}$.

The constraints \eqref{eq:cc} only restrict the probability of violating inequality and do not restrict the amount of violation directly. 
\blue{To address this issue, the conditional expectation constraints \eqref{eq:ce} are also considered in this paper which can limit the expected violation of these inequality constraints.}
\begin{equation}\label{eq:ce}
	\mathbb{E}_{\bm a_{tj}\sim \mathcal P_{a}}\left[\sum_{t=1}^n\bm{a}_{tj}^{\top} \bm{x}_t - b_j\bigg|\sum_{t=1}^n\bm{a}_{tj}^{\top} \bm{x}_t - b_j > 0\right]\le \gamma_j,\forall j = 1,\dots,m,
\end{equation}
where $\mathbb E$ denotes expectation and $
\gamma_j$ is the given parameter. This paper assumes that the expectation in \eqref{eq:ce} always exists.

\blue{Replacing the deterministic constraints in \eqref{prob:deterministic} by the chance constraints \eqref{eq:cc} and conditional expectation constraints \eqref{eq:ce}},
we formulate the following stochastic programming problem:\vspace{-0.1cm}
\begin{equation}\label{OriginalCCP}
	\begin{aligned}
		\max_{\bm{x}}\quad &\sum_{t=1}^n \bm{c}_t^{\top} \bm{x}_t \\
		\rm{s.t.}\quad& \mathbb{P}_{\bm a_{tj}\sim \mathcal P_{a}}\left(\sum_{t=1}^n \bm{a}_{tj}^{\top} \bm{x}_t \le b_j\right)\ge \eta_j,\forall j = 1,\dots,m\\
		&\mathbb{E}_{\bm a_{tj}\sim \mathcal P_{a}}\left[\sum_{t=1}^n\bm{a}_{tj}^{\top} \bm{x}_t-b_j\bigg|\sum_{t=1}^n\bm{a}_{tj}^{\top} \bm{x}_t - b_j > 0\right]\le \gamma_j,\forall j = 1,\dots,m\\
		&\bm{1}^{\top} \bm{x}_t \le 1, \bm{x}_t \in \{0,1\}^k, \forall t = 1,\dots,n.
	\end{aligned}
\end{equation}
\vspace{-0.15cm}For convenience, we refer to the problem \eqref{OriginalCCP} as a CCP problem throughout this paper, thought \eqref{eq:ce} are not standard chance constraints.

\subsection{Equivalent Transformation}
In the online setting of the CCP problem \eqref{OriginalCCP}, the revenue vector $\bm c_t$ and the distributions of the resource consumption vectors $\{\bm a_{t1},\dots,\bm a_{tm}\}$ are revealed one by one. For any $t$ and $j$, we assume that $\bm a_{tj}\sim N(\bar{\bm a}_{tj}, \bm K_{tj})$ which means the true value of $\bm{a}_{tj}$ belongs to a known Gaussian distribution with mean $\bar{\bm{a}}_{tj}$ and covariance matrix $\bm{K}_{tj}$. 
In the following, we will reformulate \eqref{OriginalCCP} into a deterministic problem.

First is to reformulate the chance constraints \eqref{eq:cc}. According to the \blue{properties of Gaussian distribution, we have that} $\sum_{t=1}^n \bm{a}_{tj}^\top \bm{x}_t \sim N(\sum_{t=1}^n \bar{\bm{a}}_{tj}^\top \bm{x}_t$, $\sum_{t=1}^n \bm{x}_t^\top \bm{K}_{tj} \bm{x}_t)$. Thus, the constraints \eqref{eq:cc} are equivalent to the following constraints \cite{jagannathan1974chance}:
\begin{equation}\label{eq:rcc}
	\sum_{t=1}^n \bar{\bm{a}}_{tj}^\top \bm{x}_t + \Phi^{-1}(\eta_j)\sqrt{\sum_{t=1}^n \bm{x}_t^\top \bm{K}_{tj} \bm{x}_t}\le b_j,\forall j = 1,\dots,m,
\end{equation}
where $\Phi(\cdot)$ represents the cumulative distribution function of the standard Gaussian
distribution. 

Next is to reformulate the conditional expectation constraints \eqref{eq:ce}. 
The value of the conditional expectation $\mathbb{E}[\sum_{t=1}^n\bm{a}_{tj}^{\top} \bm{x}_t-b_j|\sum_{t=1}^n\bm{a}_{tj}^{\top} \bm{x}_t - b_j > 0]$ is related to the variance $\sqrt{\sum_{t=1}^n \bm{x}_t^\top \bm{K}_{tj} \bm{x}_t}$, which makes the parameter $\gamma_j$ difficult to quantify. As an alternative, we replace $\gamma_j$ with the normalized value $\tilde{\gamma}_j$ that equals to $\gamma_j/\sqrt{\sum_{t=1}^n \bm{x}_t^\top \bm{K}_{tj} \bm{x}_t}$. Specifically, the constraint \eqref{eq:ce} is rewritten as
\begin{equation}\label{eq:ce2}
	\mathbb{E}_{\bm a_{tj}\sim \mathcal P_{a}}\left[\frac{\sum_{t=1}^n\bm{a}_{tj}^{\top} \bm{x}_t - b_j}{\sqrt{\sum_{t=1}^n \bm{x}_t^\top \bm{K}_{tj} \bm{x}_t}}\bigg|\sum_{t=1}^n\bm{a}_{tj}^{\top} \bm{x}_t - b_j > 0\right]\le \tilde{\gamma}_j,\forall j = 1,\dots,m,
\end{equation}

According to \cite{prekopa1973contributions}, the constraints \eqref{eq:ce2} are equivalent to 
\begin{equation}\label{eq:rce}
	\sum_{t=1}^n \bar{\bm{a}}_{tj}^\top \bm{x}_t + h^{-1}(\tilde{\gamma_j})\sqrt{\sum_{t=1}^n \bm{x}_t^\top \bm{K}_{tj} \bm{x}_t} \le b_j,\forall j = 1,\dots,m,
\end{equation}
where the function $h(\cdot)$ has the form\footnote{\blue{In \cite{prekopa1973contributions}, $\sqrt{\cdot}$ in $\sqrt{2\pi}$ is left out which is a typo.}}
\begin{equation}
	h(z) = \frac{e^{-\frac{1}{2}z^2}/\sqrt{2\pi}}{1-\Phi(z)}-z.
\end{equation}

\blue{Under the assumption of Gaussian distribution, the constraints \eqref{eq:rcc} and \eqref{eq:rce} have the same formulations and the only difference lies on the coefficient of $\sqrt{\sum_{t=1}^n \bm{x}_t^\top \bm{K}_{tj} \bm{x}_t}$. Consequently, when both \eqref{eq:rcc} and \eqref{eq:rce} exist, we can merge them into the following constraint,}
\begin{equation}\label{eq:rccc}
	\sum_{t=1}^n \bar{\bm{a}}_{tj}^\top \bm{x}_t + \psi_j\sqrt{\sum_{t=1}^n \bm{x}_t^\top \bm{K}_{tj} \bm{x}_t} \le b_j,\forall j = 1,\dots,m,
\end{equation}
where $\psi_j = \max\{\Phi^{-1}(\eta_j),h^{-1}(\tilde{\gamma}_j)\}$. When $\eta_j > 50\%$ or $\tilde{\gamma}_j < \sqrt{\pi/2}$, $\psi_j > 0$.

Substituting \eqref{eq:rccc} into \eqref{OriginalCCP}, the CCP problem \eqref{OriginalCCP} is equivalent to the following deterministic problem:

\begin{equation}\label{NLCCP}
	\begin{aligned}
		\max_{\bm{x}}\quad &\sum_{t=1}^n \bm{c}_t^\top \bm{x}_t \\
		\rm{s.t.}\quad  &\sum_{t=1}^n \bar{\bm{a}}_{tj}^\top \bm{x}_t + \psi_j\sqrt{\sum_{t=1}^n \bm{x}_t^\top \bm{K}_{tj} \bm{x}_t}\le b_j, \forall j = 1,\dots,m.\\
		&\bm{1}^{\top} \bm{x}_t \le 1, \bm{x}_t \in \{0,1\}^k, \forall t = 1,\dots,n
	\end{aligned}
\end{equation}

The problem \eqref{NLCCP} is an integer SOC programming~(ISOCP) problem when $\psi_j>0,\forall j$. 
The offline version of \eqref{NLCCP} can be solved by commercial solvers such as Gurobi. 
However, in the online setting, it is difficult to solve problem \eqref{NLCCP} due to its \blue{indecomposability}: $\bm{x}_t$ with different subscripts $t$ are coupled with each other in $\sqrt{\sum_{t=1}^n \bm{x}_t^\top \bm{K}_{tj} \bm{x}_t}$. \blue{In other words, calculating the individual constraint consumption at each time step $t$ is challenging.} \blue{Moreover, we make the following assumption throughout this paper.}\vspace{-0.1cm}
\newtheorem{assumption}{\bf Assumption}[]
\begin{assumption}\label{assumption 1}
	We assume
	\begin{enumerate}
		\item [(a)] The coefficient set $\{c_{tj}, \bm{\bar{a}}_{tj}, \bm K_{tj}\}$'s are i.i.d.~sampled from an unknown distribution $\mathcal P$.
		\item [(b)] The coefficient set $\{c_{tj}, \bm{\bar{a}}_{tj}, \bm K_{tj}\}$'s are bounded.
		\item [(c)] The right-hand-side $\bm b = n\bm d$. $\bm d$ is bounded and its upper and lower bounds are both positive.
	\end{enumerate}
\end{assumption}

\subsection{Relaxed Linear Problem} \label{subsec:relaxed_lp}
\blue{To begin, we provide the following proposition.}
\newtheorem{prop}{Proposition}[]
\begin{prop}\label{prop:1}
For arbitrary $t$ and $j$, the following equation holds.\vspace{-0.1cm}
\begin{equation*} 
	\sqrt{\bm{x}_t^\top \bm{K}_{tj} \bm{x}_t} = \bm{\bm{\gamma}}^\top_{tj} \bm{x}_t,\forall \bm{x}_t\in\{\bm{x} \in \{0,1\}^k|\bm{1}^\top\bm{x} \le 1\}, \vspace{-0.3cm}
\end{equation*}
where $\bm{\gamma}_{tj}$ is formed by concatenating the square roots of the diagonal elements of the matrix $\bm{K}_{tj}$.
\end{prop}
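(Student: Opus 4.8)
The plan is to exploit the combinatorial structure of the feasible set $\{\bm{x} \in \{0,1\}^k : \bm{1}^\top \bm{x} \le 1\}$, which reduces the identity to a one-line verification. First I would observe that any $\bm{x}_t$ in this set has \emph{at most one} nonzero coordinate: the entries are binary and their sum is at most one, so $\bm{x}_t$ is either the zero vector $\bm{0}$ or one of the standard unit vectors $\bm{e}_1,\dots,\bm{e}_k$. This is the only structural fact the proof needs.

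Next I would expand the quadratic form coordinatewise as $\bm{x}_t^\top \bm{K}_{tj} \bm{x}_t = \sum_{p=1}^k \sum_{q=1}^k x_{tp} x_{tq} (\bm K_{tj})_{pq}$. The crucial point is that under the at-most-one-nonzero structure every product $x_{tp} x_{tq}$ with $p \ne q$ contains a factor equal to zero, so all off-diagonal cross terms vanish and a single diagonal term survives. Combined with the binary identity $x_{tl}^2 = x_{tl}$, the quadratic form collapses to $\sum_{l=1}^k x_{tl} (\bm K_{tj})_{ll}$; that is, it selects the diagonal entry of $\bm{K}_{tj}$ indexed by the chosen scheme, and equals $0$ when $\bm{x}_t = \bm{0}$.

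Taking the square root and using $\sqrt{x_{tl}} = x_{tl}$ for $x_{tl} \in \{0,1\}$ then yields $\sqrt{\bm{x}_t^\top \bm{K}_{tj}\bm{x}_t} = \sum_{l=1}^k x_{tl}\sqrt{(\bm K_{tj})_{ll}}$, which is exactly $\bm{\gamma}_{tj}^\top \bm{x}_t$ by the definition of $\bm{\gamma}_{tj}$ as the concatenation of the square roots of the diagonal of $\bm{K}_{tj}$. This closes the argument for every feasible $\bm{x}_t$.

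I do not anticipate a genuine obstacle; the content is a direct verification rather than a subtle estimate. The one point worth emphasizing is that the identity relies essentially on the constraint $\bm{1}^\top \bm{x}_t \le 1$: without it the off-diagonal entries of $\bm{K}_{tj}$ would contribute, the binary squaring trick would no longer suffice, and the quadratic form would cease to agree with the linear expression. The value of the proposition is precisely that, on the feasible set, the nonlinear term $\sqrt{\bm{x}_t^\top \bm{K}_{tj}\bm{x}_t}$ admits an \emph{exact} linear representation, which is what later enables the linearization of the coupled ISOCP constraint.
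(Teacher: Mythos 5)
Your proof is correct and follows essentially the same route as the paper's: both rest on the observation that any feasible $\bm{x}_t$ is either $\bm{0}$ or a standard unit vector $\bm{e}_l$, so the quadratic form picks out a single diagonal entry of $\bm{K}_{tj}$ and the square root is linear on this set. Your coordinatewise expansion is just a more explicit version of the paper's direct substitution of $\bm{e}_l$, and you even handle the zero vector explicitly, which the paper's description of the feasible set as $\{\bm{e}_s : s=1,\dots,k\}$ technically omits (harmlessly, since both sides vanish there).
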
\vspace{-0.3cm}
\begin{proof}
See \ref{sec:appendix prop 1}.
\end{proof}
To address the non-decomposable issue raised by the non-linearity of $\sqrt{\sum_{t=1}^n \bm{x}_t^\top \bm{K}_{tj} \bm{x}_t}$, we linearize this term to decouple different $\bm{x}_t$.
Specifically, according to Cauchy-Schwarz inequality
$\sqrt{n\sum_{t=1}^n\bm{x}_t^\top \bm{K}_{tj} \bm{x}_t} \ge \sum_{t=1}^n $ 
$\sqrt{\bm{x}_t^\top \bm{K}_{tj} \bm{x}_t}$
and Proposition~\ref{prop:1}, the nonlinear \blue{and non-decomposable} problem \eqref{NLCCP} can be approximated by
\vspace{-0.2cm}
\begin{equation} \label{RelaxedCCP}
	\begin{aligned}
		\max_{\bm{x}}\quad &\sum_{t = 1}^n \bm{c}_t^\top \bm{x}_t \\
		\rm{s.t.}\quad&\sum_{t = 1}^n \left(\bar{\bm{a}}_{tj}^\top + \frac{\psi_j}{\sqrt{n}} \bm{\gamma}^\top_{tj}  \right) \bm{x}_t \le b_j, \forall j = 1,\dots,m\\
		&\bm{1}^{\top} \bm{x}_t \le 1, \bm{x}_t \in \{0,1\}^k, \forall t = 1,\dots,n.
	\end{aligned}\vspace{-0.1cm}
\end{equation}

\blue{It is worth to mention that the relaxed} problem \eqref{RelaxedCCP} is an integer LP~(ILP) problem \blue{which is linear and decomposable}, and can be solved in the online setting by existing algorithms. 
The vanilla OPD algorithm for solving this relaxed problem is the basis of our algorithms for solving the CCP problem \eqref{NLCCP} and we will detail it in the following section.


\section{Solution Algorithms}
\label{algorithm}

\blue{In this section, we introduce several online primal-dual methods to handle the online SOC constrained problem \eqref{NLCCP}. Firstly, we revisit the state-of-the-art OPD algorithm for solving the relaxed problem \eqref{RelaxedCCP}. Then, some heuristic correction methods based on the structure of \eqref{NLCCP} are proposed to improve the practical performance.}

\vspace{-0.2cm}
\subsection{OPD Algorithm for online ILP}
\blue{Recall that \eqref{RelaxedCCP} is an ILP problem} and Li et al.~\cite{li2020simple} have proposed an effective OPD algorithm \blue{to solve the online ILP problem}. 
\blue{For simplicity, denote $\tilde{\bm{a}}_{tj} = \bar{\bm{a}}_{tj} + {\psi_j}\bm{\gamma}_{tj}{/\sqrt{n}}$ and we present the OPD method as shown in Algorithm \ref{Algo 1}.}

\begin{algorithm}[htb]\label{Algo 1}
\caption{OPD Algorithm for ILP}
\LinesNumbered
\KwIn{$\bm{d} = \bm{b}/n$}
\KwOut{$\bm{x}=(\bm{x}_1,...,\bm{x}_n)$}
{\bf Initialize:} $\bm{p}_1=\bm{0}$\\
\For{$t=1,...,n$}
{
	Set $v_t = \max_{l=1,\dots,k} \  (\bm{c}_t^\top - \bm{p}_t^\top \tilde{\bm{A}}_{t})\bm{e}_l$\\
	\eIf{$v_t > 0$}
	{
		Pick an index $l_t$ randomly from
		\vspace{-0.1cm}
 $$\left\{l:v_t=(\bm{c}_t^\top - \bm{p}_t^\top \tilde{\bm{A}}_{t})\bm{e}_l\right\}$$\\	
 \vspace{-0.1cm}
  		Set $\bm{x}_t = \bm{e}_{l_t}$
	}{Set $\bm{x}_t = \bm{0}$}
	Compute {
	\vspace{-0.2cm}
	$$\bm{p}_{t+1} = \max\left\{\bm{p}_{t} + \frac{1}{\sqrt{n}}\left(\tilde{\bm{A}}_{t}\bm{x}_t-\bm{d}\right), \mathbf{0}\right\}$$
	\vspace{-0.1cm}
	}
}
\end{algorithm}

In Algorithm \ref{Algo 1}, we denote $\tilde{\bm{A}}_t = (\tilde{\bm{a}}_{t1}^\top,\dots,\tilde{\bm{a}}_{tm}^\top)^\top$, $\bm{b} = (b_1,\dots,b_m)^\top$ and $\bm{x}_t = (x_{t1},\dots,x_{tk})^\top$. 
Algorithm \ref{Algo 1} is a dual-based algorithm which maintains a dual vector $\bm{p}_t$. 
In each round $t$, $\bm{c}_t$ and $\tilde{\bm{A}}_{t}$ are revealed. 
Then, $\bm{x}_t$ is determined immediately by choosing $l$ that maximizes $(\bm{c}_t^\top - \bm{p}_t^\top \tilde{\bm{A}}_{t})\bm{e}_l$, where $\bm e_l \in \mathbb R^k$ and $\bm{e}_l$ is the unit vector \blue{with $l$-th coordinate being $1$}. 
{After determining $\bm{x}_t$, $\bm{p}_t$ is updated by a projected stochastic subgradient descent method where $(\bm{d} - \tilde{\bm{A}}_{t}\bm{x}_t)$ is the subgradient corresponding to $\bm{p}_t$.}
Li et al.~\cite{li2020simple} have shown that Algorithm \ref{Algo 1} provides a near-optimal solution of the problem \eqref{Linear-relaxedCCP} that is the linear relaxation of \eqref{RelaxedCCP}, as stated in Theorem \ref{thm1}.
\vspace{-0.1cm}
\begin{equation} \label{Linear-relaxedCCP}
	\begin{aligned}
		\max_{\bm{x}}\quad &\sum_{t = 1}^n \bm{c}_t^\top \bm{x}_t \\
		\rm{s.t.}\quad&\sum_{t = 1}^n \left(\bar{\bm{a}}_{tj}^\top + \frac{\psi_j}{\sqrt{n}} \bm{\gamma}^\top_{tj}  \right) \bm{x}_t \le b_j,\forall j = 1,\dots,m\\
		&\boldsymbol{1}^{\top} \boldsymbol{x}_t \le 1, \bm{x}_t \ge \bm{0}, \forall t = 1,\dots,n.
	\end{aligned}
\end{equation}

\newtheorem{thm}{\bf Theorem}[]
\begin{thm}[Theorem 3 in \cite{li2020simple}]\label{thm1}
Under Assumption \ref{assumption 1}, the upper bound of the expected optimality gap \eqref{eq:regret} and the expected constraint violation \eqref{eq:violation} of Algorithm \ref{Algo 1} compared to the optimal solution of the LP problem \eqref{Linear-relaxedCCP} are on the order of $\sqrt{n}$, i.e., 
\vspace{-0.1cm}
\begin{equation}\label{eq:regret}
    {\mathbb E_{\xi_{tj} \sim \mathcal P}} \left[\hat{R}_n^{LP}-\sum_{t = 1}^n \bm{c}_t^\top \bm{x}_t\right] \le O(\sqrt{n})
\end{equation}

\begin{equation}\label{eq:violation}
    {\mathbb E_{\xi_{tj} \sim \mathcal P}}\left[\left\|\left(\sum_{t=1}^n \tilde{\bm{A}}_{t} \bm{x}_t-\bm{b}\right)^+\right\|_2\right] \le O(\sqrt{n})
\end{equation}
where $\hat{R}_n^{LP}$ is the optimal objective value of the LP problem \eqref{Linear-relaxedCCP} and $\bm{x}_t$ is the output of Algorithm \ref{Algo 1} and $(\cdot)^+$ is the positive part function. \blue{$\xi_{tj}$ denotes the coefficient set $\{c_{tj}, \bm{\bar{a}}_{tj}, \bm K_{tj}\}$ and $\mathcal P$ is any distribution that satisfies Assumption \ref{assumption 1} (b).}
\end{thm}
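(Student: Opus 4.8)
The plan is to recognize problem \eqref{Linear-relaxedCCP} as an online linear program of exactly the type analyzed in \cite{li2020simple}. Once we set $\tilde{\bm a}_{tj} = \bar{\bm a}_{tj} + \psi_j \bm\gamma_{tj}/\sqrt n$, the columns $(\bm c_t, \tilde{\bm A}_t)$ are i.i.d.\ by Assumption \ref{assumption 1}(a) and are uniformly bounded in $n$ by Assumption \ref{assumption 1}(b), since the factor $\psi_j/\sqrt n \le \psi_j$ is harmless. Thus the main content is to verify the hypotheses of the cited result and to reproduce the underlying dual--subgradient argument, which I outline below.

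First I would set up the Lagrangian dual of \eqref{Linear-relaxedCCP} obtained by dualizing only the $m$ resource constraints while keeping the per-request simplex $\Delta = \{\bm x \ge \bm 0 : \bm 1^\top \bm x \le 1\}$. This yields, for each $t$, the convex per-step dual function $f_t(\bm p) = \bm p^\top \bm d + \max_{\bm x \in \Delta}(\bm c_t - \tilde{\bm A}_t^\top \bm p)^\top \bm x$, and by weak LP duality $\hat R_n^{LP} \le \sum_{t=1}^n f_t(\bm p)$ for every fixed $\bm p \ge \bm 0$. The greedy rule of Algorithm \ref{Algo 1} computes exactly the maximizer defining $f_t(\bm p_t)$ (the test $v_t>0$ encodes the option $\bm x_t=\bm 0\in\Delta$), so by Danskin's theorem $g_t := \bm d - \tilde{\bm A}_t \bm x_t$ is a subgradient of $f_t$ at $\bm p_t$, and the update $\bm p_{t+1} = (\bm p_t - \tfrac{1}{\sqrt n} g_t)^+$ is projected online subgradient descent with step size $\eta = 1/\sqrt n$.

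For the optimality gap I would use the identity $\bm c_t^\top \bm x_t = f_t(\bm p_t) - \bm p_t^\top g_t$. Taking expectations and using that $\bm p_t$ is measurable with respect to the first $t-1$ columns, $\mathbb E[f_t(\bm p_t)] = \mathbb E[f(\bm p_t)] \ge f(\bm p^\star)$, where $f := \mathbb E[f_t]$ and $\bm p^\star := \arg\min_{\bm p \ge \bm 0} f$; combined with $\mathbb E[\hat R_n^{LP}] \le n f(\bm p^\star)$ this gives $\mathbb E[\sum_t f_t(\bm p_t)] \ge \mathbb E[\hat R_n^{LP}]$. Hence $\mathbb E[\hat R_n^{LP} - \sum_t \bm c_t^\top \bm x_t] \le \mathbb E[\sum_t \bm p_t^\top g_t]$, and nonexpansiveness of the projection gives the telescoping bound $2\eta \sum_t \bm p_t^\top g_t \le \|\bm p_1\|_2^2 - \|\bm p_{n+1}\|_2^2 + \eta^2 \sum_t \|g_t\|_2^2 \le \eta^2 n G^2$, where $G$ bounds $\|g_t\|_2$ via Assumption \ref{assumption 1}(b). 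With $\eta = 1/\sqrt n$ and $\bm p_1=\bm 0$ this is $O(\sqrt n)$, proving \eqref{eq:regret}.

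For the constraint violation I would telescope the update in the opposite direction: since $(\,\cdot\,)^+$ dominates its argument coordinatewise, $\bm p_{t+1} \ge \bm p_t - \eta g_t$, so summing and using $\bm p_1 = \bm 0$ and $\bm b = n\bm d$ yields $(\sum_t \tilde{\bm A}_t \bm x_t - \bm b)^+ \le \sqrt n\, \bm p_{n+1}$ and therefore $\|(\sum_t \tilde{\bm A}_t \bm x_t - \bm b)^+\|_2 \le \sqrt n\, \|\bm p_{n+1}\|_2$. It then suffices to show $\mathbb E\|\bm p_{n+1}\|_2 = O(1)$ uniformly in $n$, and I expect this dual-boundedness step to be the main obstacle: it is the only place where Assumption \ref{assumption 1}(c), that $\bm d$ is bounded away from $\bm 0$, is essential. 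The argument is that whenever a coordinate $p_{t,j}$ exceeds a constant threshold determined by the bounds on $\bm c_t$ and $\tilde{\bm A}_t$, every action with positive reduced reward avoids consuming resource $j$, so that coordinate cannot increase and in fact decreases by $\eta d_j$; hence each coordinate is trapped in a band of width $O(1)$, uniformly in $n$. Feeding this back establishes \eqref{eq:violation}, and the same boundedness controls $\|\bm p^\star\|_2$ and closes the regret estimate.
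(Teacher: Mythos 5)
Your overall route is exactly the paper's: the paper does not prove this statement at all but imports it as Theorem~3 of \cite{li2020simple}, applied to \eqref{Linear-relaxedCCP} after absorbing $\psi_j\bm\gamma_{tj}/\sqrt{n}$ into $\tilde{\bm a}_{tj}$, and your reconstruction is faithful to the cited proof: the separable dual $f_t(\bm p)=\bm p^\top\bm d+\max_{\bm x\in\Delta}(\bm c_t-\tilde{\bm A}_t^\top\bm p)^\top\bm x$ with weak duality $\hat R_n^{LP}\le\sum_t f_t(\bm p)$, the identity $\bm c_t^\top\bm x_t=f_t(\bm p_t)-\bm p_t^\top\bm g_t$ with the independence of $\bm p_t$ from the $t$-th column, the telescoping bound for the regret, and the coordinatewise inequality $\left(\sum_t\tilde{\bm A}_t\bm x_t-\bm b\right)^+\le\sqrt{n}\,\bm p_{n+1}$ combined with an $O(1)$ dual bound for the violation. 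Your observation that the $n$-dependence of $\tilde{\bm a}_{tj}$ is harmless because the constants depend only on the uniform bounds is also the right (and needed) remark, which is why the theorem statement stresses that $\mathcal P$ is \emph{any} distribution satisfying Assumption~\ref{assumption 1}(b).

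The one step that does not survive scrutiny as written is the per-coordinate dual-boundedness argument, which you yourself flag as the crux. First, ``every action with positive reduced reward avoids consuming resource $j$'' is too strong: an action with $0<\tilde a_{tjl}<d_j$ can keep positive reduced reward no matter how large $p_{t,j}$ is, so the coordinate need not decrease by $\eta d_j$; what is true (and suffices for the trap) is only that any chosen action has $\tilde a_{tjl}<d_j$, so the coordinate cannot increase once $p_{t,j}>\bar c/\underline d$. Second, even this weaker claim requires entrywise nonnegativity of $\tilde{\bm A}_t$: Assumption~\ref{assumption 1}(b) asserts only boundedness, and if entries may be negative, a column can have $\tilde a_{tjl}>d_j$ yet positive reduced reward because another coordinate contributes a large negative term $p_{t,j'}\tilde a_{tj'l}$, in which case $p_{t,j}$ escapes your threshold. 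The proof in \cite{li2020simple} sidesteps this with an aggregate drift: whenever $\bm x_t\neq\bm 0$ the selection rule forces $\bm p_t^\top\tilde{\bm A}_t\bm x_t<\bm c_t^\top\bm x_t\le\bar c$, hence $\|\bm p_{t+1}\|_2^2\le\|\bm p_t\|_2^2+2\eta\left(\bar c-\underline d\,\|\bm p_t\|_2\right)+\eta^2G^2$ (using $\bm p_t^\top\bm d\ge\underline d\,\|\bm p_t\|_1\ge\underline d\,\|\bm p_t\|_2$), which traps $\|\bm p_t\|_2$ pathwise below $\bar c/\underline d+O(\eta)$ with no sign condition on the entries; this is also the style of argument the present paper uses for its own Proposition on dual boundedness in the SOCP case. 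Substituting this one lemma for your coordinatewise trap, the rest of your argument goes through verbatim.
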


\blue{Theorem \ref{thm1} states the upper bound of the expected optimality gap and the expected constraint violation, which are both $O(\sqrt{n})$. Then, we provide the lower bound of the expected optimality gap in the following theorem.}
 
\begin{thm}\label{thm: lower bound of OPD}
Under Assumption \ref{assumption 1}, the lower bound of the expected optimality gap of Algorithm \ref{Algo 1} compared to the optimal solution of the LP problem \eqref{Linear-relaxedCCP} is on the order of $\sqrt{n}$, i.e., \vspace{-0.1cm}
\begin{equation}\label{eq:lower bound opt gap}
    {\mathbb E}_{\xi_{tj} \sim \mathcal P} \left[\hat{R}_n^{LP}-\sum_{t = 1}^n \bm{c}_t^\top \bm{x}_t\right] \ge -O(\sqrt{n})
\end{equation}
where 
$\hat{R}_n^{LP}$ is the optimal objective value of the LP problem \eqref{Linear-relaxedCCP} and 
$\bm{x}_t$ is the output of Algorithm \ref{Algo 1}.
\end{thm}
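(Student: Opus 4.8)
The plan is to prove the reverse direction of Theorem~\ref{thm1}: that Algorithm~\ref{Algo 1} cannot exceed the LP optimum $\hat R_n^{LP}$ by more than $O(\sqrt n)$ in expectation, and that any such over-performance is ``paid for'' entirely by constraint violation. The starting point is LP duality for \eqref{Linear-relaxedCCP}. Writing $\Delta = \{\bm x\in\mathbb R^k : \bm x\ge \bm 0,\ \bm 1^\top \bm x \le 1\}$ for the per-request feasible set and letting $(\bm p^\ast, \bm q^\ast)$ denote an optimal dual solution, strong duality gives $\hat R_n^{LP} = (\bm p^\ast)^\top \bm b + \sum_{t=1}^n q_t^\ast$, where $q_t^\ast = \max_{\bm x\in\Delta}(\bm c_t - \tilde{\bm A}_t^\top \bm p^\ast)^\top \bm x$. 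Strong duality applies because the LP is feasible (take $\bm x = \bm 0$) and bounded above by $n\bar c$, with $\bar c$ a bound on the revenues from Assumption~\ref{assumption 1}(b). Note each $q_t^\ast\ge 0$ since $\bm 0\in\Delta$.

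First I would decompose the algorithm's reward against the fixed optimal dual $\bm p^\ast$: for each $t$, $\bm c_t^\top \bm x_t = (\bm c_t - \tilde{\bm A}_t^\top \bm p^\ast)^\top \bm x_t + (\bm p^\ast)^\top \tilde{\bm A}_t \bm x_t$. Since the output $\bm x_t$ of Algorithm~\ref{Algo 1} always lies in $\Delta$ (it equals some $\bm e_{l_t}$ or $\bm 0$), the first summand is at most $q_t^\ast$. Summing over $t$ and substituting the duality identity yields
\[
\sum_{t=1}^n \bm c_t^\top \bm x_t \;\le\; \sum_{t=1}^n q_t^\ast + (\bm p^\ast)^\top\!\sum_{t=1}^n \tilde{\bm A}_t \bm x_t \;=\; \hat R_n^{LP} + (\bm p^\ast)^\top\!\Big(\sum_{t=1}^n \tilde{\bm A}_t \bm x_t - \bm b\Big).
\]
Because $\bm p^\ast \ge \bm 0$, the last inner product is at most $(\bm p^\ast)^\top(\sum_t \tilde{\bm A}_t \bm x_t - \bm b)^+ \le \|\bm p^\ast\|_2\,\big\|(\sum_t \tilde{\bm A}_t \bm x_t - \bm b)^+\big\|_2$ by Cauchy--Schwarz, so the over-performance is controlled by the product of the dual norm and exactly the constraint-violation quantity appearing in \eqref{eq:violation}.

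The remaining step is to bound $\|\bm p^\ast\|_2$ by a constant independent of $n$. Here I would use Assumption~\ref{assumption 1}(c), $\bm b = n\bm d$ with $\bm d$ bounded below by a positive constant $d_{\min}$: since all dual terms are nonnegative, $\hat R_n^{LP} = (\bm p^\ast)^\top \bm b + \sum_t q_t^\ast \ge p_j^\ast\, n d_j$ for each $j$, while $\hat R_n^{LP} \le n\bar c$; combining gives $p_j^\ast \le \bar c/d_{\min} = O(1)$, hence $\|\bm p^\ast\|_2 \le C_p$ for a deterministic constant $C_p$ valid on every sample path. Taking expectations and using that this bound is deterministic,
\[
\mathbb E_{\xi_{tj}\sim\mathcal P}\Big[\sum_{t=1}^n \bm c_t^\top \bm x_t - \hat R_n^{LP}\Big] \le C_p\,\mathbb E_{\xi_{tj}\sim\mathcal P}\Big[\big\|(\textstyle\sum_t \tilde{\bm A}_t \bm x_t - \bm b)^+\big\|_2\Big] \le C_p\cdot O(\sqrt n) = O(\sqrt n),
\]
where the middle inequality is precisely \eqref{eq:violation} from Theorem~\ref{thm1}. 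Rearranging gives \eqref{eq:lower bound opt gap}.

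I expect the main obstacle to be the uniform boundedness of the optimal dual $\bm p^\ast$: it is a random object depending on the realized coefficients, and the argument above only works because the $O(1)$ bound holds deterministically across all realizations, which in turn hinges on $\bm d$ being bounded away from zero. Everything else---the duality decomposition, the sign argument using $\bm p^\ast\ge\bm 0$, and the appeal to the violation bound \eqref{eq:violation}---is routine once this bound is in place.
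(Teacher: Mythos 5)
Your proposal is correct and follows essentially the same route as the paper: strong LP duality, evaluating the Lagrangian at the optimal dual price and at the algorithm's output (which is feasible for the inner maximization), and then bounding the dual-weighted violation term via the $O(1)$ dual bound together with the $O(\sqrt n)$ constraint-violation bound \eqref{eq:violation} from Theorem~\ref{thm1}. The only cosmetic difference is that you prove the uniform $O(1)$ bound on $\hat{\bm p}^{LP}$ inline (via $\hat R_n^{LP} \le n\bar c$ and $\bm b = n\bm d$ with $\bm d$ bounded below), whereas the paper cites \cite{li2019online} for it --- your inline argument mirrors the paper's own Proposition in \ref{sec:appendix lemma1} for the SOCP dual, so this is a welcome self-contained touch rather than a divergence.
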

\begin{proof}
	See \ref{sec:appendix thm lower bound opt gap}.
\end{proof}

Next, we will analysis the performance of Algorithm \ref{Algo 1} compared to the optimal solution of the ISOCP problem \eqref{NLCCP}. 
The linear relaxation of the ISOCP problem \eqref{NLCCP} is \vspace{-0.2cm}
\begin{equation}\label{Linear-NLCCP}
	\begin{aligned}
		\max_{\boldsymbol{x}}\quad &\sum_{t=1}^n \boldsymbol{c}_t^\top \boldsymbol{x}_t \\
		\rm{s.t.}\quad  &\sum_{t=1}^n \bar{\boldsymbol{a}}_{tj}^\top \boldsymbol{x}_t + \psi_j\sqrt{\sum_{t=1}^n \boldsymbol{x}_t^\top \bm{K}_{tj} \boldsymbol{x}_t}\le b_j, \forall j = 1,\dots,m\\
		&\boldsymbol{1}^{\top} \boldsymbol{x}_t \le 1, \bm{x}_t \ge \bm{0}, \forall t = 1,\dots,n
	\end{aligned}
\end{equation}
where the binary variables are relaxed into continuous variables on $[0, 1]$.
The optimal objective values of the ISOCP problem \eqref{NLCCP} and SOCP problem \eqref{Linear-NLCCP} are referred to as $\hat{R}_n^{ISOCP}$ and $\hat{R}_n^{SOCP}$, respectively.
\blue{Then, we establish the optimality gap between the LP problem \eqref{Linear-relaxedCCP} and SOCP problem \eqref{Linear-NLCCP}, and the constraint violation in the following lemma.}

\newtheorem{lemma}{\bf Lemma}[]
\begin{lemma} \label{lemma1}
Under Assumption \ref{assumption 1}, we have the following results on the SOCP problem \eqref{Linear-NLCCP} and LP problem \eqref{Linear-relaxedCCP}.
\begin{enumerate}
	\item [(a)] If the decision variables $(\bm x_1^\top,\dots,\bm x_n^\top)^\top$ of the SOCP problem \eqref{Linear-NLCCP} is set to the optimal solution of the LP problem \eqref{Linear-relaxedCCP}, the optimality gap of \eqref{Linear-NLCCP} satisfies
	
	\begin{equation}
   		-O(\sqrt{n}) \le \hat{R}_n^{SOCP}-\sum_{t = 1}^n \bm{c}_t^\top \hat{\bm{x}}_t^{LP} \le 0
	\end{equation}
where $\hat{R}_n^{SOCP}$ is the optimal objective value of \eqref{Linear-NLCCP} and $\hat{\bm{x}}_t^{LP}$ is the optimal solution of \eqref{Linear-relaxedCCP}).
    \item [(b)] For any $\bm x = (\bm x_1^\top,\dots,\bm x_n^\top)^\top$ satisfying $\bm x_t \in \{\bm x \in \mathbb R^k|\bm 1^\top \bm x\le \bm 1, \bm x \ge \bm 0\},\forall t = 1,\dots,n$, the gap between the resource consumption of \eqref{Linear-NLCCP} and \eqref{Linear-relaxedCCP} satisfies
    
	\begin{equation}
	    \left\|\bm g\left({\bm{x}}\right)-\sum_{t=1}^n\tilde{\bm A}_t \bm x_t\right\|_2 \le O(\sqrt{n})
	\end{equation}
where $g_j(\bm{x}) = \sum_{t=1}^n \bar{\boldsymbol{a}}_{tj}^\top \bm{x}_t + \psi_j\sqrt{\sum_{t=1}^n \boldsymbol{x}_t^\top \bm{K}_{tj} \boldsymbol{x}_t}$ is the resource consumption of \eqref{Linear-NLCCP}.
\end{enumerate}
\end{lemma}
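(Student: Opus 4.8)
The plan is to treat part (b) as the workhorse and then derive both inequalities of part (a) from it by a homogeneity-plus-scaling argument, exploiting that the budget satisfies $b_j = n d_j = \Theta(n)$ while the stochastic penalty term is only $O(\sqrt n)$. I would prove (b) first, then the lower bound of (a), and leave the sharp sign in the upper bound of (a) for last, since that is where the difficulty concentrates.

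For part (b), I would rewrite the per-resource gap in closed form. Because $\tilde{\bm a}_{tj} = \bar{\bm a}_{tj} + \psi_j \bm\gamma_{tj}/\sqrt n$, the $j$-th components of $\bm g(\bm x)$ and $\sum_t \tilde{\bm A}_t \bm x_t$ share the mean part $\sum_t \bar{\bm a}_{tj}^\top \bm x_t$ and differ only in the penalty, so
\[ g_j(\bm x) - \sum_{t=1}^n \tilde{\bm a}_{tj}^\top \bm x_t = \psi_j\Big(\sqrt{\textstyle\sum_{t=1}^n \bm x_t^\top \bm K_{tj}\bm x_t} - \tfrac{1}{\sqrt n}\sum_{t=1}^n \bm\gamma_{tj}^\top \bm x_t\Big). \]
Both terms in the parenthesis are nonnegative, so it suffices to bound each by $O(\sqrt n)$. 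Writing $s_{tj}=\sqrt{\bm x_t^\top \bm K_{tj}\bm x_t}$, Assumption \ref{assumption 1}(b) bounds $\bm K_{tj}$, and since $\bm x_t$ lies in the simplex we have $\|\bm x_t\|_2\le 1$, so $s_{tj}\le\sqrt\kappa$ for a constant $\kappa$ and hence $\sqrt{\sum_t s_{tj}^2}\le\sqrt{\kappa n}$. Likewise each entry of $\bm\gamma_{tj}$ is a bounded $\sqrt{(\bm K_{tj})_{ll}}$ and $\bm 1^\top\bm x_t\le 1$, giving $\bm\gamma_{tj}^\top\bm x_t\le\sqrt\kappa$ and $\frac{1}{\sqrt n}\sum_t \bm\gamma_{tj}^\top\bm x_t\le\sqrt{\kappa n}$. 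Thus each coordinate of the gap is at most $\psi_j\sqrt{\kappa n}$, and since $m$ is fixed the Euclidean norm is $O(\sqrt n)$.

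For the lower bound in part (a), I would perturb the LP optimum into the SOCP-feasible set. The constraint function $g_j$ is positively homogeneous of degree one, so $g_j(\theta\bm x)=\theta g_j(\bm x)$ for $\theta\in(0,1)$. Applying part (b) at $\hat{\bm x}^{LP}$ together with its LP-feasibility gives $g_j(\hat{\bm x}^{LP})\le b_j + C\sqrt n$ for a constant $C$. Taking $\theta=\min_j b_j/(b_j+C\sqrt n)$ and using $b_j\ge n\underline d$ from Assumption \ref{assumption 1}(c) yields $\theta\ge 1-O(1/\sqrt n)$, and $\theta\hat{\bm x}^{LP}$ is then SOCP-feasible (scaling down preserves the simplex constraints). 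Hence $\hat R_n^{SOCP}\ge\theta\hat R_n^{LP}\ge\hat R_n^{LP}-(1-\theta)\hat R_n^{LP}$, and since $\hat R_n^{LP}=O(n)$ by the boundedness of $\bm c_t$, the loss is $O(\sqrt n)$, which is the claimed lower bound.

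The upper bound is where the main obstacle lies. The same scaling run in reverse — take the SOCP optimum, use part (b) to show it is LP-feasible up to an $O(\sqrt n)$ violation, and scale by $1-O(1/\sqrt n)$ — delivers only $\hat R_n^{SOCP}\le\hat R_n^{LP}+O(\sqrt n)$, not the sharp $\hat R_n^{SOCP}\le\hat R_n^{LP}$ asserted here. To recover the exact sign I would try to show directly that every SOCP-feasible point is LP-feasible, i.e. $\frac{1}{\sqrt n}\sum_t \bm\gamma_{tj}^\top\bm x_t\le\sqrt{\sum_t \bm x_t^\top\bm K_{tj}\bm x_t}$. This is immediate at integer $\bm x_t$, where Proposition \ref{prop:1} converts $\bm\gamma_{tj}^\top\bm x_t$ into $s_{tj}$ and the inequality reduces to the Cauchy--Schwarz bound $\frac{1}{\sqrt n}\sum_t s_{tj}\le\sqrt{\sum_t s_{tj}^2}$. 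The difficulty is that the containment can fail at fractional points, where $\bm\gamma_{tj}^\top\bm x_t\ge s_{tj}$ strictly; the saving structure must be Assumption \ref{assumption 1}(c), which keeps the penalty terms $O(\sqrt n)$ and hence lower-order against the $\Theta(n)$ budget, so the resource constraints are essentially slack at the optimum. I expect the crux to be making this "the penalty cannot bind for large $n$" intuition rigorous, for instance through an LP sensitivity bound on $\hat R_n^{LP}$ as a function of the right-hand side, and this is the step I would treat with the most care.
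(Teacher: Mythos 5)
Your part (b) is essentially the paper's own argument: the paper likewise isolates the per-resource gap $g_j(\bm x)-\sum_{t=1}^n\tilde{\bm a}_{tj}^\top\bm x_t=\psi_j\bigl(\sqrt{\sum_{t=1}^n\bm x_t^\top\bm K_{tj}\bm x_t}-\tfrac{1}{\sqrt n}\sum_{t=1}^n\bm\gamma_{tj}^\top\bm x_t\bigr)$ and bounds it by $\psi_j\sqrt{\bar K}\sqrt n$ from the boundedness of $\bm K_{tj}$; if anything you are slightly more careful, since the two-sided norm bound does require controlling both nonnegative terms, and the paper only writes the one-sided estimate. Your lower bound in (a) is correct but takes a genuinely different route from the paper: the paper argues in the dual, first proving that the optimal dual solution of the SOCP is bounded by $\bar c/\underline d$ (Proposition \ref{prop: dual value boundedness}) and then comparing Lagrangians at $\hat{\bm p}^{SOCP}$ and $\hat{\bm p}^{LP}$, paying an additive $\tfrac{\bar c}{\underline d}\bar\psi\bar\gamma\sqrt n$ via $\sqrt{\bm x_t^\top\bm K_{tj}\bm x_t}\le\bm\gamma_{tj}^\top\bm x_t$. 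Your primal argument---restore SOCP feasibility of $\hat{\bm x}^{LP}$ by scaling with $\theta=1-O(1/\sqrt n)$, using homogeneity of $g_j$ and $b_j=\Theta(n)$---is more elementary and avoids the dual-boundedness proposition entirely; what the paper's dual route buys is reuse of the same machinery for the upper bound.

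The genuine gap in your proposal is the upper bound of (a): you never establish $\hat R_n^{SOCP}\le\sum_{t=1}^n\bm c_t^\top\hat{\bm x}_t^{LP}$, and the sensitivity-analysis route you sketch can intrinsically only produce $O(\sqrt n)$ slack, never the sharp sign. But your diagnosis of the obstruction is exactly right, and it cuts deeper than you suggest: the paper proves the sharp bound in \eqref{B6} by replacing the penalty $\psi_j\sqrt{\sum_t\bm x_t^\top\bm K_{tj}\bm x_t}$ with $\tfrac{\psi_j}{\sqrt n}\sum_t\bm\gamma_{tj}^\top\bm x_t$ inside the inner maximization of the dual, a step that requires $\tfrac{1}{\sqrt n}\sum_t\bm\gamma_{tj}^\top\bm x_t\le\sqrt{\sum_t\bm x_t^\top\bm K_{tj}\bm x_t}$ pointwise over the \emph{continuous} simplex---precisely the containment you observed fails at fractional points (take $\bm K_{tj}=\bm I$ and $\bm x_t=\tfrac1k\bm 1$: the left side is $\sqrt n$, the right side $\sqrt{n/k}$). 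Indeed the sharp inequality is false in general: take $m=1$, $k=2$, identical requests with $\bm c_t=\bm 1$, $\bar{\bm a}_t=a\bm 1$, $\bm K_t=\bm I$, $b=nd$ with $0<d<a$; then the LP value is $nd/(a+\psi/\sqrt n)$, while spreading mass evenly over coordinates and rounds gives SOCP value $nd/(a+\psi/\sqrt{2n})$, so $\hat R_n^{SOCP}-\hat R_n^{LP}=+\Theta(\sqrt n)$, and this instance satisfies Assumption \ref{assumption 1}. So you should not chase ``$\le 0$'': your own reverse-scaling argument (the SOCP optimum violates the LP constraints by at most $O(\sqrt n)$ by part (b), then scale by $1-O(1/\sqrt n)$) already yields $\hat R_n^{SOCP}\le\hat R_n^{LP}+O(\sqrt n)$, and this weaker two-sided $O(\sqrt n)$ statement is all that Theorem \ref{thm2} downstream actually uses.
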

\begin{proof}
	See \ref{sec:appendix lemma1}.
\end{proof}
\blue{As shown in Lemma \ref{lemma1}, both the optimal gap and the constraint violation between the LP problem \eqref{Linear-relaxedCCP} and SOCP problem \eqref{Linear-NLCCP} are $O(\sqrt{n})$. Then, putting Theorem \ref{thm1}, Theorem \ref{thm: lower bound of OPD}, and Lemma \ref{lemma1} together, we can obtain that the expected optimality gap and constraint violation compared to the optimal solution of the SOCP problem \eqref{Linear-NLCCP} in Theorem \ref{thm2}.} 

\begin{thm}\label{thm2}
Under Assumption \ref{assumption 1}, the expected optimality gap and constraint violation of Algorithm \ref{Algo 1} compared to the optimal solution of the SOCP problem (\ref{Linear-NLCCP}) are on the order of $\sqrt{n}$, i.e., 

\begin{equation}\label{regret:SOCP}
    -O(\sqrt{n}) \le {\mathbb E_{\xi_{tj} \sim \mathcal P}} \left[\hat{R}_n^{SOCP}-\sum_{t = 1}^n \bm{c}_t^\top \bm{x}_t\right] \le O(\sqrt{n})
\end{equation}
\begin{equation}
    {\mathbb E_{\xi_{tj} \sim \mathcal P}} \left[\left\|\left(\bm g\left({{\bm{x}}}\right)-\bm{b}\right)^+\right\|_2\right] \le O(\sqrt{n})
\end{equation}
where 
$\hat{R}_n^{SOCP}$ is the optimal objective value of \eqref{Linear-NLCCP}, $\bm{x}_t$ is the output of Algorithm \ref{Algo 1} and $g(\bm x)$ is the left-hand side of the SOC constraints. 
\end{thm}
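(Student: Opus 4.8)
The plan is to prove Theorem~\ref{thm2} by a two-step comparison that routes through the LP problem \eqref{Linear-relaxedCCP} as an intermediate benchmark, so that the already-established bounds (Theorem~\ref{thm1}, Theorem~\ref{thm: lower bound of OPD}, and Lemma~\ref{lemma1}) can be chained together by telescoping and the triangle inequality.

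For the optimality gap, I would first observe that since $\hat{\bm{x}}_t^{LP}$ is optimal for \eqref{Linear-relaxedCCP} we have $\sum_{t=1}^n \bm{c}_t^\top \hat{\bm{x}}_t^{LP} = \hat{R}_n^{LP}$, so Lemma~\ref{lemma1}(a) reads $-O(\sqrt{n}) \le \hat{R}_n^{SOCP} - \hat{R}_n^{LP} \le 0$. Then I split
\[
\hat{R}_n^{SOCP} - \sum_{t=1}^n \bm{c}_t^\top \bm{x}_t = \big(\hat{R}_n^{SOCP} - \hat{R}_n^{LP}\big) + \Big(\hat{R}_n^{LP} - \sum_{t=1}^n \bm{c}_t^\top \bm{x}_t\Big).
\]
The upper bound follows because the first bracket is $\le 0$ by Lemma~\ref{lemma1}(a) while the expectation of the second is $\le O(\sqrt{n})$ by Theorem~\ref{thm1}; the lower bound follows because the first bracket is $\ge -O(\sqrt{n})$ by Lemma~\ref{lemma1}(a) while the expectation of the second is $\ge -O(\sqrt{n})$ by Theorem~\ref{thm: lower bound of OPD}. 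Taking expectations and using linearity yields \eqref{regret:SOCP}.

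For the constraint violation, I would use that the output of Algorithm~\ref{Algo 1} satisfies $\bm{x}_t \in \{0,1\}^k$ with $\bm{1}^\top \bm{x}_t \le 1$ and hence lies in the box $\{\bm{x} \in \mathbb{R}^k \mid \bm{1}^\top \bm{x} \le 1,\ \bm{x} \ge \bm{0}\}$ to which Lemma~\ref{lemma1}(b) applies. Writing $\bm{g}(\bm{x}) - \bm{b} = \big(\bm{g}(\bm{x}) - \sum_{t=1}^n \tilde{\bm{A}}_t \bm{x}_t\big) + \big(\sum_{t=1}^n \tilde{\bm{A}}_t \bm{x}_t - \bm{b}\big)$ and combining the coordinatewise inequality $(u+v)^+ \le u^+ + v^+$ with monotonicity and the triangle inequality of $\|\cdot\|_2$, I obtain
\[
\Big\|\big(\bm{g}(\bm{x}) - \bm{b}\big)^+\Big\|_2 \le \Big\|\bm{g}(\bm{x}) - \sum_{t=1}^n \tilde{\bm{A}}_t \bm{x}_t\Big\|_2 + \Big\|\Big(\sum_{t=1}^n \tilde{\bm{A}}_t \bm{x}_t - \bm{b}\Big)^+\Big\|_2.
\]
The first term is $O(\sqrt{n})$ deterministically by Lemma~\ref{lemma1}(b), and the expectation of the second is $O(\sqrt{n})$ by Theorem~\ref{thm1}; taking expectations completes the bound.

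The main point is conceptual rather than technical: the correct choice of \eqref{Linear-relaxedCCP} as the pivot is what lets the algorithm's guarantees (stated relative to this LP) and the SOCP-versus-LP comparison of Lemma~\ref{lemma1} chain together cleanly. The only subtlety to watch is the matching of the deterministic bounds of Lemma~\ref{lemma1} (valid for every realization under the uniform boundedness in Assumption~\ref{assumption 1}) with the in-expectation bounds of Theorem~\ref{thm1} and Theorem~\ref{thm: lower bound of OPD}. Since deterministic bounds pass through the expectation unchanged, linearity of expectation handles the combination, and no concentration or independence argument beyond what is already embedded in the cited results is required.
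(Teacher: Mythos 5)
Your proof is correct and follows essentially the same route as the paper: both decompose the optimality gap through the LP benchmark \eqref{Linear-relaxedCCP} (your $\hat{R}_n^{LP}$ pivot is identical to the paper's split via $\sum_{t=1}^n \bm{c}_t^\top \hat{\bm{x}}_t^{LP}$, since the two quantities coincide) and bound the pieces by Lemma~\ref{lemma1}(a), Theorem~\ref{thm1}, and Theorem~\ref{thm: lower bound of OPD}, while the violation bound uses the same $(u+v)^+ \le u^+ + v^+$ plus triangle-inequality argument with Lemma~\ref{lemma1}(b). Your added remarks---that the algorithm's binary output lies in the box to which Lemma~\ref{lemma1}(b) applies, and that the deterministic Lemma~\ref{lemma1} bounds combine with the in-expectation bounds by linearity---are details the paper leaves implicit, and they are accurate.
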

\begin{proof}
	See \ref{sec:appendix thm2}.
\end{proof}
\blue{Moreover, inequality $\hat{R}_n^{ISOCP} \le \hat{R}_n^{SOCP}$ holds due to the fact that the SOCP problem \eqref{Linear-NLCCP} is the linear relaxation of the ISOCP problem \eqref{NLCCP}.} Thus, Algorithm \ref{Algo 1} achieves $O(\sqrt{n})$ regret and constraint violation compared to the optimal solution of the ISOCP problem \eqref{NLCCP} as stated in Theorem \ref{thm: regret and violation ISOCP}.

\begin{thm}\label{thm: regret and violation ISOCP}
Under Assumption \ref{assumption 1}, the regret and constraint violation of Algorithm \ref{Algo 1} compared to the optimal solution of the ISOCP problem \eqref{NLCCP} are on the order of $\sqrt{n}$, i.e.,\vspace{-0.1cm}
\begin{equation}\label{regret:ISOCP}
   {\mathbb E_{\xi_{tj} \sim \mathcal P}} \left[\hat{R}_n^{ISOCP}-\sum_{t = 1}^n \bm{c}_t^\top \bm{x}_t\right] \le O(\sqrt{n})\vspace{-0.1cm}
\end{equation}
\begin{equation}
    {\mathbb E_{\xi_{tj} \sim \mathcal P}} \left[\left\|\left(\bm g\left({{\bm{x}}}\right)-\bm{b}\right)^+\right\|_2\right] \le O(\sqrt{n})\vspace{-0.1cm}
\end{equation}
where $\hat{R}_n^{ISOCP}$ is the optimal objective value of \eqref{NLCCP}, $\bm{x}_t$ is the output of Algorithm \ref{Algo 1} and $g(\bm x)$ is the left-hand side of the SOC constraints. 
\end{thm}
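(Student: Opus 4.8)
The plan is to derive this theorem as an almost immediate corollary of Theorem~\ref{thm2}, exploiting the fact that the SOCP problem \eqref{Linear-NLCCP} is precisely the continuous relaxation of the ISOCP problem \eqref{NLCCP}. The two assertions --- the optimality-gap bound and the constraint-violation bound --- can be handled separately, and neither requires reopening the mirror-descent analysis of Algorithm~\ref{Algo 1} or the estimates of Lemma~\ref{lemma1}.

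First I would dispose of the constraint-violation statement. Observe that the quantity $\left\|\left(\bm g(\bm x)-\bm b\right)^+\right\|_2$ depends only on the SOC left-hand side $\bm g(\cdot)$, the right-hand side $\bm b$, and the iterates $\bm x_t$ produced by Algorithm~\ref{Algo 1}; it makes no reference to which benchmark optimal value is used. Since Algorithm~\ref{Algo 1} yields the same output $\bm x_t$ regardless of whether we compare against \eqref{Linear-NLCCP} or \eqref{NLCCP}, the bound ${\mathbb E_{\xi_{tj} \sim \mathcal P}}\left[\left\|\left(\bm g(\bm x)-\bm b\right)^+\right\|_2\right]\le O(\sqrt n)$ already established in Theorem~\ref{thm2} transfers verbatim, and nothing further is needed.

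For the optimality gap, the single ingredient I would establish is the relaxation inequality $\hat R_n^{ISOCP}\le \hat R_n^{SOCP}$. This is standard: every feasible point of \eqref{NLCCP} is feasible for \eqref{Linear-NLCCP} because the binary box $\{0,1\}^k$ is contained in the continuous set $\{\bm x \ge \bm 0,\ \bm 1^\top\bm x\le 1\}$, while the objective and the SOC constraints are identical, so relaxing integrality enlarges the feasible region and can only raise the maximum. Having fixed this, I would simply chain inequalities: for the output $\bm x_t$ of Algorithm~\ref{Algo 1},
\begin{equation*}
\hat R_n^{ISOCP}-\sum_{t=1}^n \bm c_t^\top \bm x_t \;\le\; \hat R_n^{SOCP}-\sum_{t=1}^n \bm c_t^\top \bm x_t,
\end{equation*}
and taking expectations and invoking the upper bound in \eqref{regret:SOCP} from Theorem~\ref{thm2} gives
\begin{equation*}
{\mathbb E_{\xi_{tj} \sim \mathcal P}} \left[\hat R_n^{ISOCP}-\sum_{t=1}^n \bm c_t^\top \bm x_t\right]\le O(\sqrt n).
\end{equation*}

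There is essentially no analytical obstacle remaining here --- the genuine work was carried out in Lemma~\ref{lemma1} and Theorem~\ref{thm2}. The only point worth a sentence of care is why this theorem claims only an upper bound on the gap and no matching lower bound: because Algorithm~\ref{Algo 1} outputs an integer solution that may violate the SOC constraints, its objective can in principle exceed the integer-feasible optimum $\hat R_n^{ISOCP}$, so the gap may be negative and a symmetric lower bound is neither expected nor asserted.
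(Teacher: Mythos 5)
Your proof is correct and takes essentially the same route as the paper, which likewise deduces Theorem~\ref{thm: regret and violation ISOCP} immediately from the relaxation inequality $\hat R_n^{ISOCP}\le \hat R_n^{SOCP}$ (since \eqref{Linear-NLCCP} relaxes the integrality of \eqref{NLCCP}) combined with the bounds of Theorem~\ref{thm2}, the constraint-violation bound carrying over unchanged because it does not reference the benchmark. Your closing observation on why only a one-sided gap bound is asserted is a sound point the paper leaves implicit.
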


\subsection{Modified OPD Algorithms for Online CCP}
Although Algorithm \ref{Algo 1} has been able to obtain a near-optimal solution of the ISOCP problem \eqref{NLCCP} according to Theorem \ref{thm: regret and violation ISOCP}, its practical performance can be further improved \blue{by narrowing the gap between the solutions generated by Algorithm~\ref{Algo 1} and the offline ISOCP \eqref{NLCCP}}. 
\blue{To be specific,} this gap mainly comes from the following two points:
\begin{enumerate}
\renewcommand{\labelenumi}{(\alph{enumi})}
	\item The error between the offline ILP problem \eqref{RelaxedCCP} and the offline ISOCP problem \eqref{NLCCP}.
	\item The error between the online solution and offline solution of the ILP problem \eqref{RelaxedCCP}.
\end{enumerate}

\begin{algorithm}[tb]\label{Algo 2}
\caption{Modified OPD Algorithm for CCP}
\LinesNumbered
\KwIn{$\bm{d} = \bm{b}/n$}
\KwOut{$\bm{x}=(\bm{x}_1,...,\bm{x}_n)$}
{\bf Initialize:} $\bm{p}_1=\bm{0}$\\
\For{$t=1,...,n$}
{
	Compute $\bm{\beta}_{t}$ via equation \eqref{beta}\\
	Set $v_t = \max_{l=1,\dots,k} \  (\bm{c}_t^\top - \bm{p}_t^\top \hat{\bm{A}}_{t}(\bm{\beta}_t))\bm{e}_l$\\
	\eIf{$v_t > 0$}
	{
		Pick an index $l_t$ randomly from \vspace{-0.15cm}
 $$\left\{l:v_t=(\bm{c}_t^\top - \bm{p}_t^\top \hat{\bm{A}}_{t}(\bm{\beta}_t))\bm{e}_l\right\}$$\\	\vspace{-0.15cm}
 		Set $\bm{x}_t = \bm{e}_{l_t}$
	}{Set $\bm{x}_t = \bm{0}$}
	Compute {\vspace{-0.2cm}
	$$\ \bm{p}_{t+1} = \max\left\{\bm{p}_{t} + \frac{1}{\sqrt{n}}\left(\hat{\bm{A}}_{t}(\bm{\beta}_t)\bm{x}_t-\bm{d}\right),\mathbf{0}\right\}$$\vspace{-0.2cm}
	}
}
\end{algorithm}

To address these issues, we propose two modified OPD algorithms (Algorithms \ref{Algo 2} and \ref{Algo 3}) to solve the online CCP problem \eqref{NLCCP}.
In Algorithm \ref{Algo 2}, a heuristic correction is applied to correct the error (a). \blue{For the $j$-th constraint,} we introduce scale factors \vspace{-0.15cm}
\begin{equation}\label{beta}
\beta_{tj}=
\begin{cases}
	1, &t = 1 \text{ or } \sum_{i=1}^{t-1} \bm\gamma_{ij}^\top \bm x_i = 0\\
	\frac{\sqrt{t-1}\sqrt{\sum_{i=1}^{t-1} \bm{x}_i^\top \bm{K}_{ij} \bm{x}_i}}
		{\sum_{i=1}^{t-1} \bm{\gamma}_{ij}^\top \bm{x}_i}, &t = 2,\dots,n \text{ and } \sum_{i=1}^{t-1} \bm\gamma_{ij}^\top \bm x_i > 0
\end{cases}\vspace{-0.15cm}
\end{equation}
to reduce the gap between $\sqrt{\sum_{t=1}^n \bm{x}_t^\top \bm{K}_{tj} \bm{x}_t}$ and $\sum_{t=1}^n {\bm{\gamma}_{tj}^\top \bm{x}_t}$ $/{\sqrt{n}}$.
\orange{Intuitively speaking, recalling that in section \ref{subsec:relaxed_lp} we utilize Cauchy-Schwarz inequality to linearize the non-decomposable problem \eqref{NLCCP}, equation \eqref{beta} can be regarded as the empirical correction for the non-decomposable term $\sqrt{(t-1)\sum_{i=1}^{t-1}\bm{x}_i^\top \bm{K}_{ij} \bm{x}_i}$ with the linear term $\sum_{i=1}^{t-1} \gamma_{ij}^{\top}\bm{x}_i$.}

Define\vspace{-0.15cm}
\begin{equation}
\hat{\bm{a}}_{tj}(\beta_{tj}) = \bar{\bm{a}}_{tj} + \beta_{tj}\frac{\psi_j}{\sqrt{n}}\bm{\gamma}_{tj},	
\end{equation}
\begin{equation}	\hat{\bm{A}}_t(\bm{\beta_t}) = (\hat{\bm{a}}_{t1}^\top(\beta_{t1}),\dots,\hat{\bm{a}}_{tm}^\top(\beta_{tm}))^\top.
\end{equation}
As shown in Algorithm \ref{Algo 2}, $\hat{\bm{A}}_t(\bm{\beta_t})$ is used in place of $\tilde{\bm{A}}_t$. 
That is, we use the expression $\sum_{t=1}^n \beta_{tj}\bm{\gamma}_{tj}^\top \bm{x}_t/\sqrt{n}$ to approximate $\sqrt{\sum_{t=1}^n \bm{x}_t^\top \bm{K}_{tj} \bm{x}_t}$. In round $t$, $\bm{\beta}_{t}$ is calculated \blue{according to \eqref{beta}} which is based on the historical decisions and will be used in the next iteration for correction.
It is worth noting that $\bm{\beta}_{t} = (\beta_{t1},\dots,\beta_{tm})^\top$ is calculated in each round and can be computed incrementally with low computational cost.

In the numerical experiments section \ref{exper}, it is illustrated that Algorithm \ref{Algo 2} has better performance than Algorithm \ref{Algo 1} in terms of the constraint violation. 
\blue{An intuitive explanation is that Algorithm \ref{Algo 2} is more inclined to reject the orders with high uncertainty of resource consumption (i.e., $\bm{K}_{tj}$) than Algorithm \ref{Algo 1} because $\bm \beta_t \ge \bm 1$.}

Next, we will propose another algorithm to correct the error (b). The error (b) consists of two parts, the optimality gap and constraint violation. 
In most cases, compared with the optimality gap, the CCP problems have a lower tolerance for the constraint violation, since the constraint violation will cause the probability
\begin{equation}
	\mathbb{P}_{\bm a_{tj}\sim \mathcal P_{a}}\left(\sum_{t=1}^n \bm{a}_{tj}^{\top} \bm{x}_t \le b_j \right)
\end{equation}
or the conditional expectations
\begin{equation}
	\mathbb{E}_{\bm a_{tj}\sim \mathcal P_{a}}\left[\frac{\sum_{t=1}^n\bm{a}_{tj}^{\top} \bm{x}_t - b_j}{\sqrt{\sum_{t=1}^n \bm{x}_t^\top \bm{K}_{tj} \bm{x}_t}}\bigg|\sum_{t=1}^n\bm{a}_{tj}^{\top} \bm{x}_t - b_j > 0\right]
\end{equation}
to deviate from their set values. For instance, if the confidence level of a chance constraint is set to 90\% but the actual confidence level of the online solution is 60\%, this online solution is unacceptable.

\blue{For the online LP problem,} Li et al.~\cite{li2020simple} have proposed a variant of the OPD algorithm that can reduce the amount of the constraint violation in practice. It is a non-stationary algorithm in which the resource consumption is considered while doing the subgradient descent.
\blue{Specifically, instead of using the static average consumption $\bm{d}=\bm{b}/n$, the time-varying $\bm{d}_t$ calculated by the formula \eqref{eq: non-stationary update1} is utilized in Line 9 of Algorithm \ref{Algo 1}.}
\vspace{-0.15cm}
\begin{equation} \label{eq: non-stationary update1}
	\bm{d}_t = \frac{1}{n-t}\left(\bm{b}-\sum_{i=1}^t \tilde{\bm{A}}_{t}\bm{x}_t\right).\vspace{-0.1cm}
\end{equation}

\begin{algorithm}[t]\label{Algo 3}
\caption{Modified Non-stationary OPD Algorithm for CCP}
\LinesNumbered
\KwIn{$\bm{d} = \bm{b}/n$}
\KwOut{$\bm{x}=(\bm{x}_1,...,\bm{x}_n)$}
{\bf Initialize:} $\bm{p}_1=\bm{0}$, $\bm{d}_1 = \bm{d}$\\
\For{$t=1,...,n$}
{
	Compute $\bm{\beta}_{t}$ via equation (\ref{beta})\\
	Set $v_t = \max_{l=1,\dots,k} \  (\bm{c}_t^\top - \bm{p}_t^\top \hat{\bm{A}}_{t}(\bm{\beta}_t))\bm{e}_l$\\
	\eIf{$v_t > 0$}
	{
		Pick an index $l_t$ randomly from\vspace{-0.15cm}
 $$\left\{l:v_t=(\bm{c}_t^\top - \bm{p}_t^\top \hat{\bm{A}}_{t}(\bm{\beta}_t))\bm{e}_l\right\}$$\\	\vspace{-0.15cm}
 		Set $\bm{x}_t = \bm{e}_{l_t}$
	}{Set $\bm{x}_t = \bm{0}$}
	Compute $\bm{d}_{t}$ via equation \eqref{d_t}\\
	Compute 
	{\vspace{-0.15cm}
	$$\ \bm{p}_{t+1} = \max\left\{\bm{p}_{t} + \frac{1}{\sqrt{n}}\left(\hat{\bm{A}}_{t}(\bm{\beta}_t)\bm{x}_t-\bm{d}_t\right),\mathbf{0}\right\}$$
	\vspace{-0.25cm}
	}
}
\end{algorithm}

Inspired by the above non-stationary method, we propose the following formula to dynamically adjust the right-hand-side capacity $\bm{d}$ in each round:\vspace{-0.1cm}
\begin{equation}\label{d_t}
\begin{aligned}
	d_{tj} = & \frac{1}{n-t}\bigg(b_j - \sum_{i=1}^t \bar{\bm{a}}_{ij}^\top\bm{x}_i - \psi_j\sqrt{\frac{t}{n}\sum_{i=1}^t \bm{x}_i^\top \bm{K}_{ij} \bm{x}_i} \bigg),\forall j = 1,\dots,m,
\end{aligned}\vspace{-0.1cm}
\end{equation}
where 
\begin{equation*}\vspace{-0.1cm}
	\sum_{i=1}^t \bar{\bm{a}}_{ij}^\top\bm{x}_i + \psi_j\sqrt{\frac{t}{n}\sum_{i=1}^t \bm{x}_i^\top \bm{K}_{ij} \bm{x}_i} =\frac{t}{n}\left(\frac{n}{t}\sum_{i=1}^t \bar{\bm{a}}_{ij}^\top\bm{x}_i + \psi_j\sqrt{\frac{n}{t}\sum_{i=1}^t \bm{x}_i^\top \bm{K}_{ij} \bm{x}_i}\right)
\end{equation*}

\noindent is an estimation of the resource assumption at period $t$.

Then, we obtain Algorithm \ref{Algo 3} by merging the correction formula \eqref{d_t} into Algorithm \ref{Algo 2}.
\blue{The intuition behind the correction formula \eqref{d_t} is given as follows:} 
if too many resources are spent in the early rounds, the remaining resources will diminish. Then Algorithm \ref{Algo 3} will raise the dual price and be more likely to reject an order with high resource consumption as a result. On the other hand, if a large number of orders with high resource consumption are rejected at the start, resulting in an excess of remaining resources, Algorithm \ref{Algo 3} will decrease the dual price in order to accept more orders in the future. This \blue{correction strategy} makes Algorithm \ref{Algo 3} perform better than Algorithms \ref{Algo 1} and \ref{Algo 2} in numerical experiments.


\section{Numerical Experiments}
\label{exper}

In this section, we present extensive numerical experiments to verify our algorithms. Several metrics are used to measure the performance of algorithms. In one trial, the calculation formula of these metrics are given as follows. $\{\bm x_{1}, \bm x_2,\dots,\bm x_n\}$ is the output of algorithms.

1) Probability deviation:
\begin{equation}\label{eq:prob_violation} \vspace{-0.2cm}
	\frac{1}{m}\sum_{j=1}^m\bigg(\eta_j-\Phi\bigg(\frac{b_j-\sum_{t=1}^n \bar{\boldsymbol{a}}_{tj}^\top\bm{x}_t}{\sqrt{\sum_{t=1}^n \boldsymbol{x}_t^\top \bm{K}_{tj} \boldsymbol{x}_t}}\bigg)\bigg)^+ \vspace{-0.1cm}
	\end{equation}
where $\bm{x}_t$ is the output of the algorithms. 

2) Optimality gap: 
\begin{equation}\label{experiment optimality gap} 
	\hat{R}_n^{SOCP}-\sum_{t = 1}^n \bm{c}_t^\top \bm{x}_t. \vspace{-0.1cm}
\end{equation}
The exact optimality gap should be calculated with $\hat{R}_n^{ISOCP}$. For ease of calculation, the offline optimal objective value $\hat{R}_n^{ISOCP}$ is approximated by $\hat{R}_n^{SOCP}$. Noting that $\hat{R}_n^{ISOCP}\le\hat{R}_n^{SOCP}$, the real optimality gap is \blue{upper bounded by} the value of \eqref{experiment optimality gap}.

3) Competitive ratio:\vspace{-0.1cm}
\begin{equation}
	\sum_{t = 1}^n \bm{c}_t^\top \bm{x}_t / \hat{R}_n^{SOCP} \times 100\%.
\end{equation}

4) Normalized constraint violation of conditional expectation constraints:
\begin{equation}\vspace{-0.1cm}
	\big|\big|(\tilde{\bm v}(\bm x))^+\big|\big|_2\vspace{-0.1cm}
\end{equation}
where
\begin{equation}\vspace{-0.2cm}
	\tilde{v}_j(\bm x) = \mathbb{E}_{\bm a_{tj}\sim \mathcal P_{a}}\left[\frac{\sum_{t=1}^n\boldsymbol{a}_{tj}^{\top} \boldsymbol{x}_t - b_j}{\sqrt{\sum_{t=1}^n \boldsymbol{x}_t^\top \bm{K}_{tj}\boldsymbol{x}_t}}\bigg|\sum_{t=1}^n\boldsymbol{a}_{tj}^{\top} \boldsymbol{x}_t > b_j \right] - \tilde{\gamma}_j.
\end{equation}  

5) Constraint violation of conditional expectation constraints:
\begin{equation}\vspace{-0.1cm}
	\big|\big|(\bm v(\bm x))^+\big|\big|_2
\end{equation}
where
$
	v_j(\bm x) = \tilde{v}_j(\bm x)\sqrt{\sum_{t=1}^n \boldsymbol{x}_t^\top \bm{K}_{tj}\boldsymbol{x}_t}.
$                                                                                                                                                                                                                                            

\subsection{CCP Problem with Chance Constraints (\ref{eq:cc}) Only} \label{sec:4.1}
In the first subsection, we apply the proposed algorithms to the classic CCP problem which does not contain the conditional expectation constraints \eqref{eq:ce}. Algorithm \ref{Algo 1}, Algorithm \ref{Algo 2}, Algorithm \ref{Algo 3} and Algorithm \ref{Algo 3} without correction (\ref{beta}) are compared in terms of optimality gap and probability deviation. 
These algorithms are implemented on two different models. 
Table \ref{tab:model} lists the distributions from which the elements in $\bm{c}_{tj}$, $\bm{\bar{a}}_{tj}$ or $\bm{K}_{tj}$ are i.i.d.~sampled.
$f(\chi^2(v))$ denotes $X = f(Y)$ and $Y\sim~\chi^2(v)$.
The uniform distribution is bounded while the chi-square distribution is unbounded. In other words, the CCP problems in Experiment I satisfy Assumption \ref{assumption 1} but the CCP problems in Experiment II do not. 
\begin{table}[ht]
    \vspace{-0.3cm}
	\small
	\renewcommand\arraystretch{1.35}
	\centering
	\caption{\small models used in the experiments.}
	\label{tab:model}
	{
		\setlength{\tabcolsep}{15pt}{
			\begin{tabular}{c c c c c c c}
				\toprule[1.5pt]
				Experiment & $\bm{c}_{tj}$ & $\bm{\bar{a}}_{tj}$ & $\bm{K}_{tj}$ & $\bm{d}$\\
				\midrule[0.75pt]
				I & U[0, 1] & U[0, 4] & $(\text{U}[0, 1])^2$ & 1\\
				II & $\chi^2(3)$& $\frac{2}{3}\chi^2(4)$& $(\frac{2}{3}\chi^2(2))^2$ & 1\\
				\bottomrule[1.5pt]
			\end{tabular}
		}
	}
\end{table}

\subsubsection{Bounded Setting}\label{subsection: bounded setting}
In Experiment I, we set $k = 5$ and $m = 4$. 
The confidence levels of chance constraints are set to (0.65, 0.75, 0.85, 0.95). 
For each value of $n$, we run 20 simulation trials. 
In each trial, coefficients $\bm{c}_{tj}$, $\bm{\bar{a}}_{tj}$ and $\bm{K}_{tj}$ are resampled.
All metrics are averaged over all trials.

\begin{figure}[!b]
\vspace{-0.3cm}
\begin{minipage}[b]{.48\linewidth}
  \centering
  \centerline{\includegraphics[width=6.0cm]{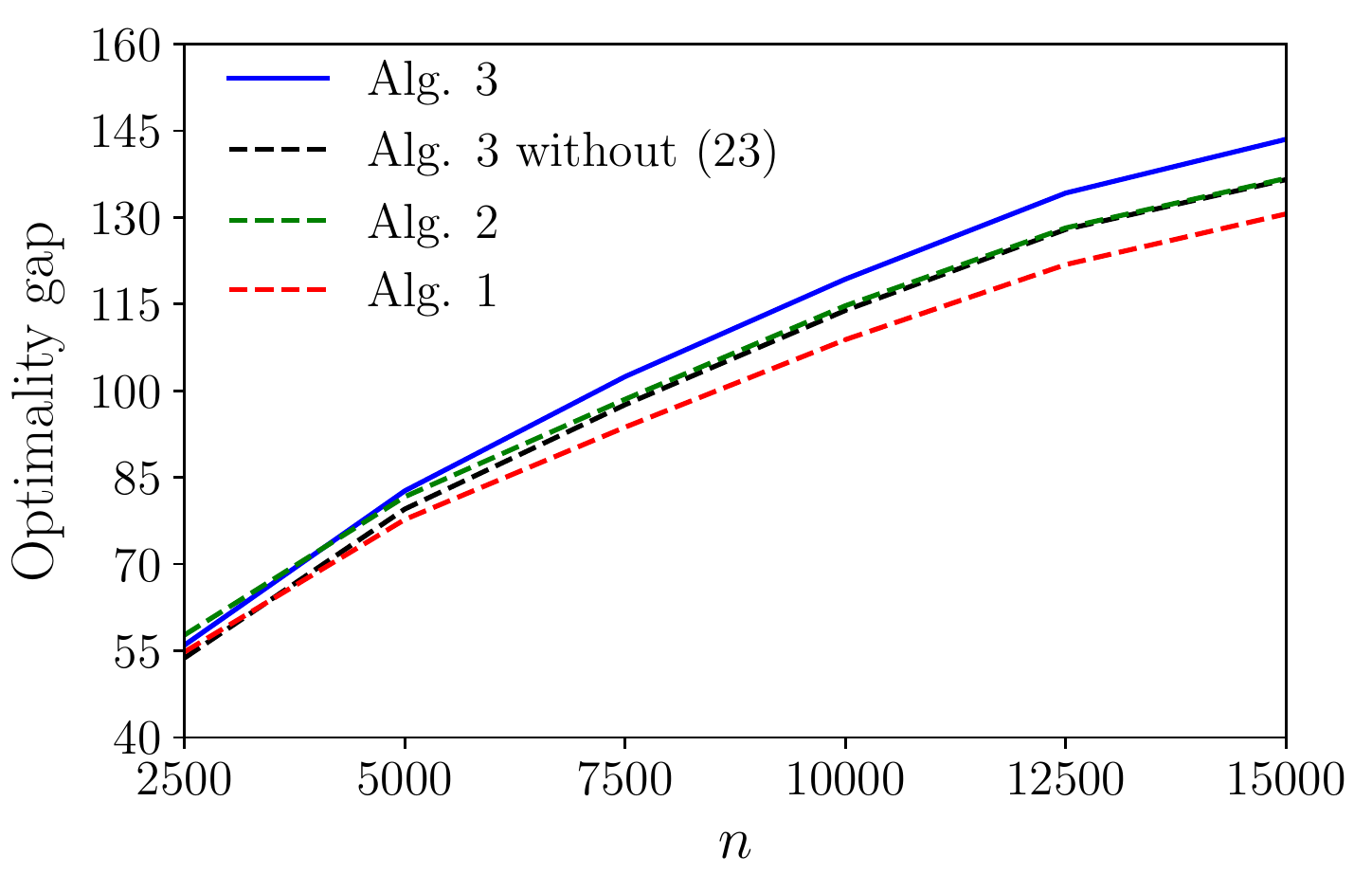}}
  \vspace{-0.3cm}
  \centerline{\small (a) optimality gap}\medskip
\end{minipage}
\hfill
\begin{minipage}[b]{0.48\linewidth}
  \centering
  \centerline{\includegraphics[width=6.0cm]{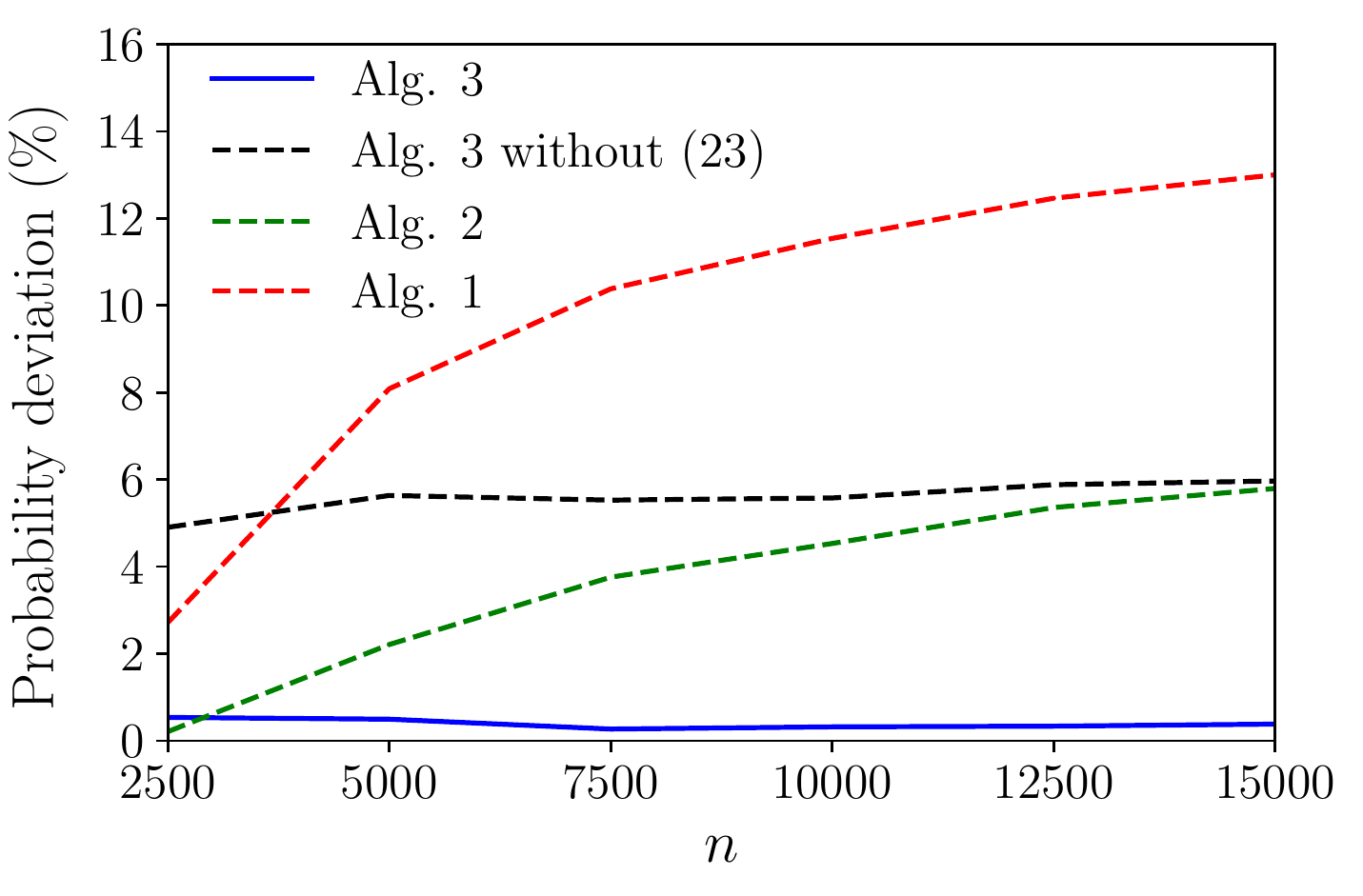}}
   \vspace{-0.3cm}
  \centerline{\small (b) probability deviation}\medskip
\end{minipage}
\vspace{-0.5cm}
\caption{\small optimality gap and probability deviation with uniform i.i.d. input.}
\vspace{0.3cm}
\label{fig:expr1_algo_comparison}
 \end{figure}

 \begin{figure}[htb]
\begin{minipage}[b]{.48\linewidth}
  \centering
  \centerline{\includegraphics[width=6.0cm]{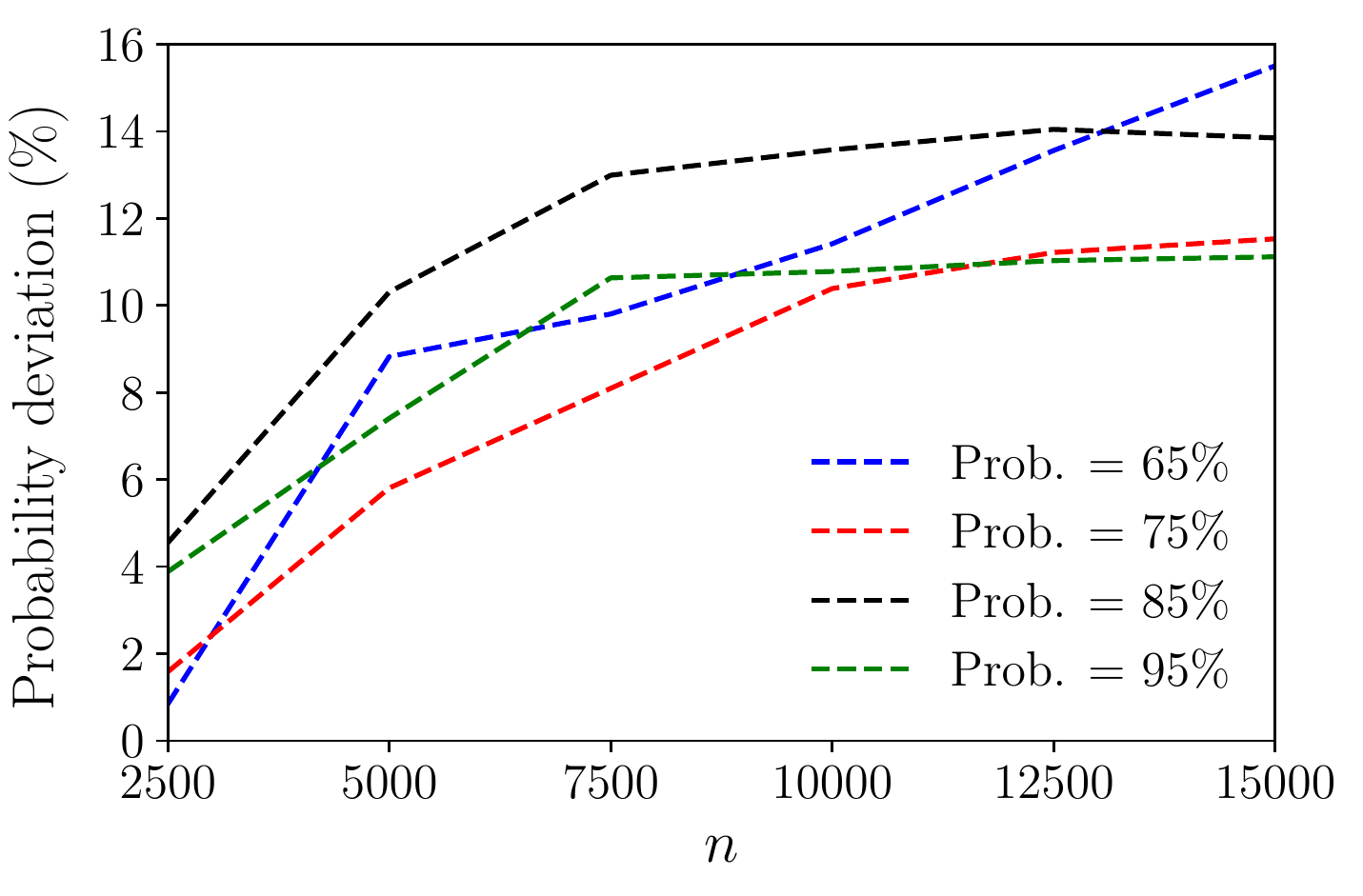}}
  \vspace{-0.3cm}
  \centerline{\small (a) Algorithm \ref{Algo 1}}\medskip
  \vspace{-0.2cm}
\end{minipage}
\hfill
\begin{minipage}[b]{0.48\linewidth}
  \centering
  \centerline{\includegraphics[width=6.0cm]{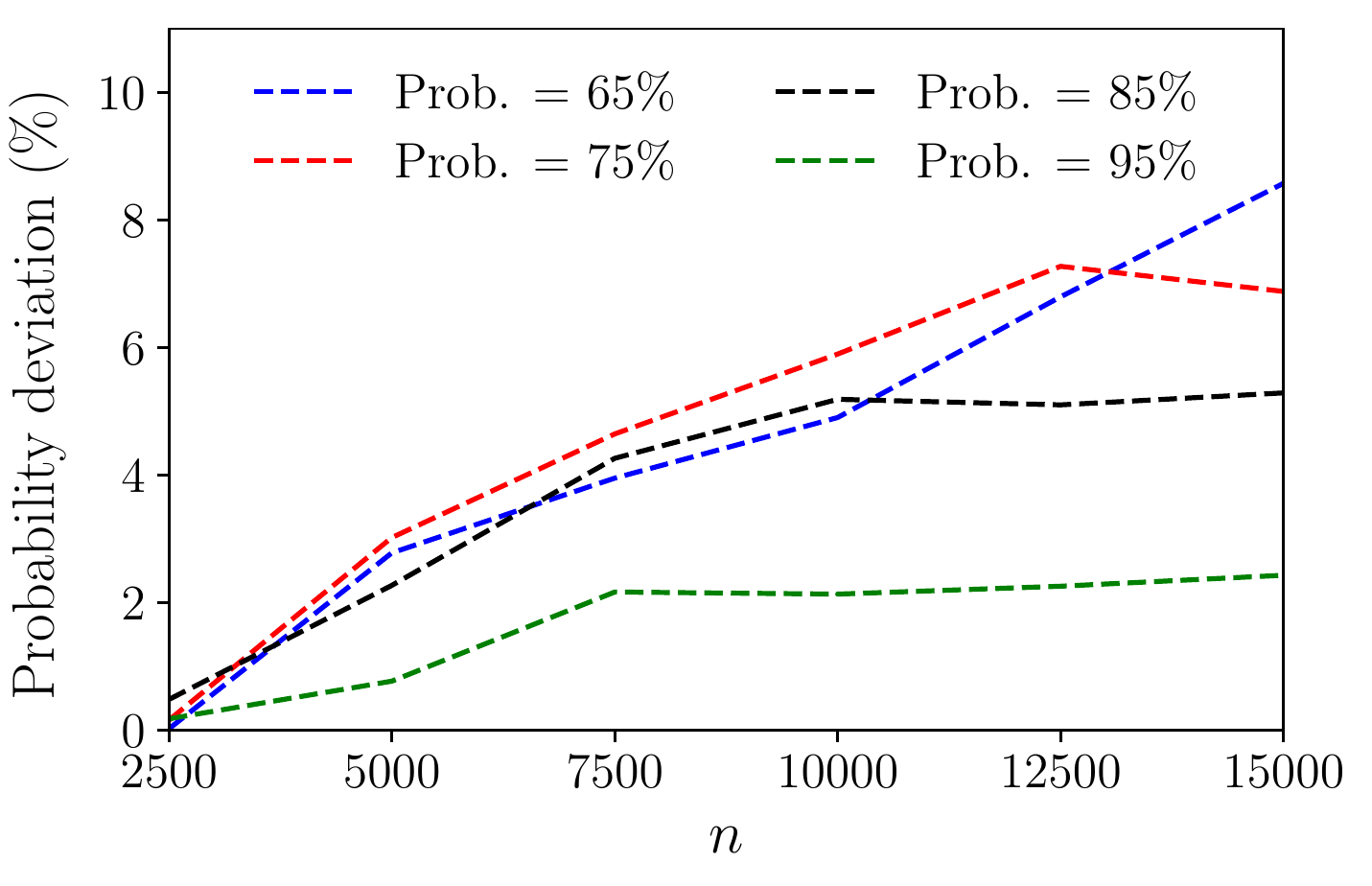}}
    \vspace{-0.3cm}
  \centerline{\small (b) Algorithm \ref{Algo 2}}\medskip
   \vspace{-0.2cm}
\end{minipage}
\begin{minipage}[b]{.48\linewidth}
  \centering
  \centerline{\includegraphics[width=6.0cm]{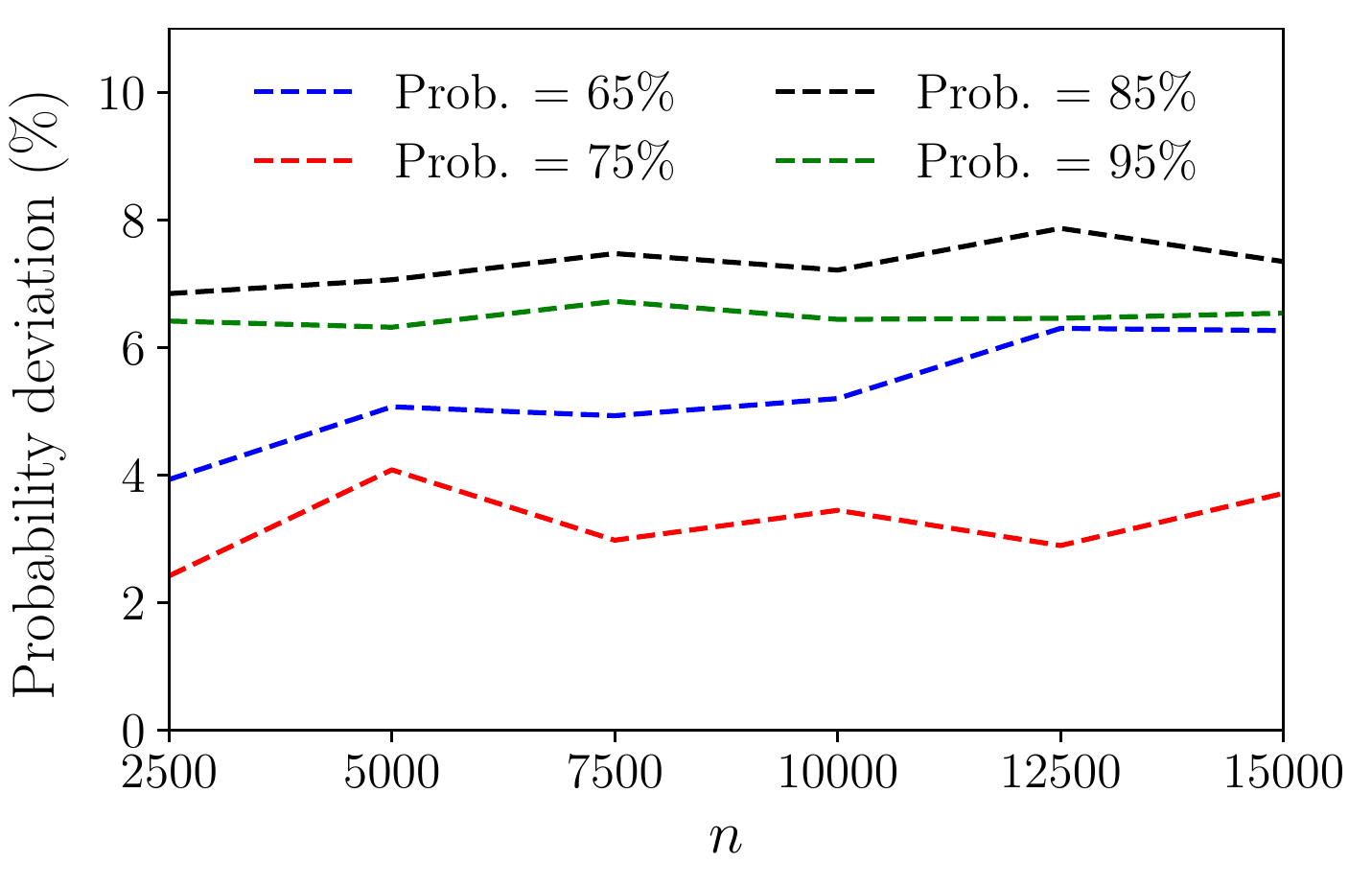}}
    \vspace{-0.3cm}
  \centerline{\small (c) Algorithm \ref{Algo 3} without correction (\ref{beta})}\medskip
\end{minipage}
\hfill
\begin{minipage}[b]{0.48\linewidth}
  \centering
  \centerline{\includegraphics[width=6.0cm]{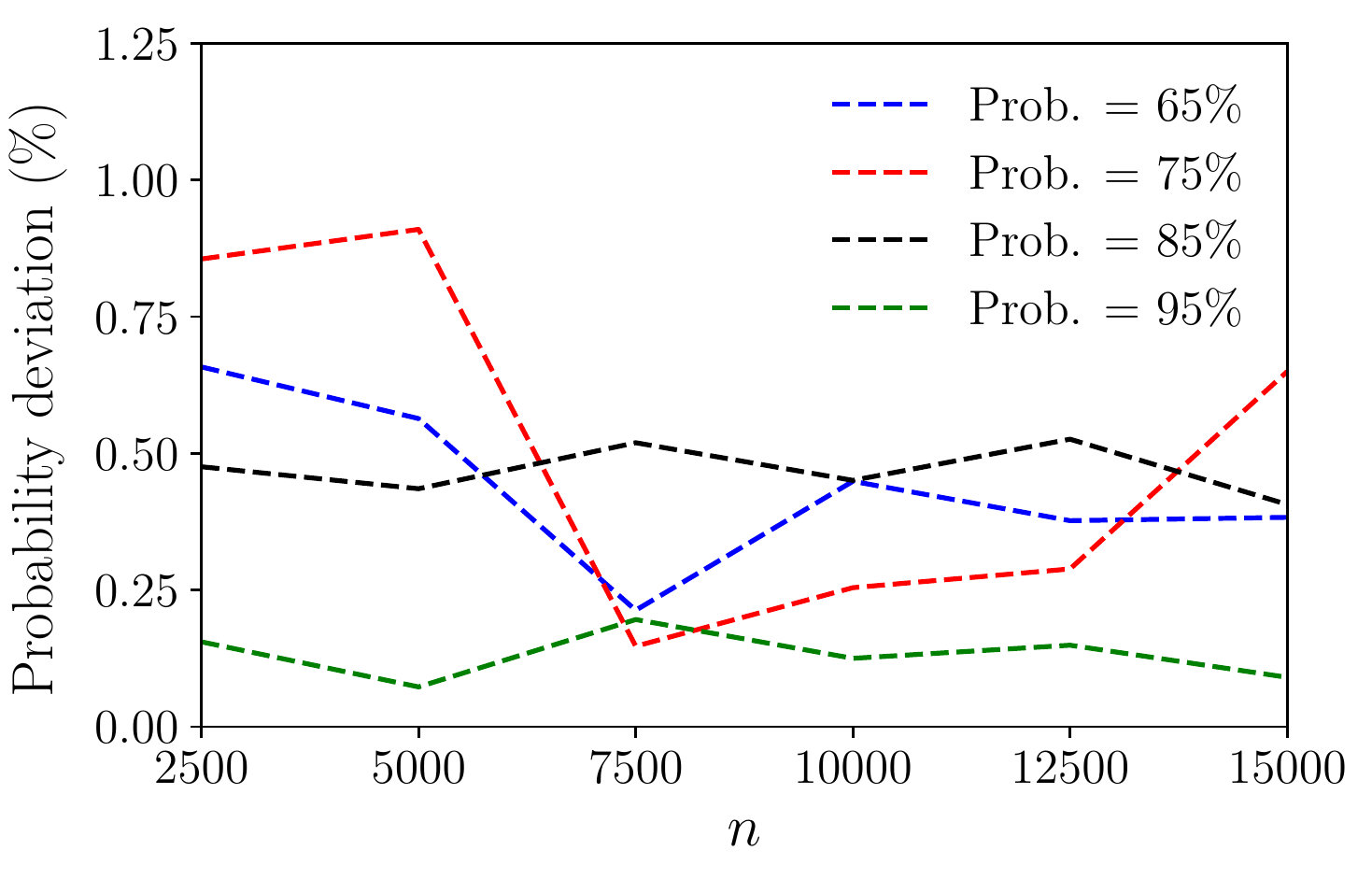}}
    \vspace{-0.3cm}
  \centerline{\small (d) Algorithm \ref{Algo 3}}\medskip
\end{minipage}
\vspace{-0.5cm}
\caption{\small probability deviation of each chance constraint with uniform i.i.d. input.}
\label{fig:expr1_detailed_violation}
\end{figure}

Figure\ \ref{fig:expr1_algo_comparison} shows the average optimality gap and probability deviation over all the simulation trials. 
From Figure~\ref{fig:expr1_algo_comparison}, we observe that the optimality gaps of these algorithms are very close, Algorithm \ref{Algo 3} has the smallest probability deviation and Algorithm \ref{Algo 1} has the largest probability deviation when $n \ge 5000$. 
For all $n$, Algorithm \ref{Algo 3} has all probability deviations less than 1\%.
Figure~\ref{fig:expr1_algo_comparison} (a) also shows that the optimality gaps of these four algorithms are on the order of $\sqrt{n}$ and verifies Theorem \ref{thm2}.
Figure~\ref{fig:expr1_detailed_violation} presents the probability deviations of each chance constraint of these four algorithms. For each chance constraint, the probability deviation of Algorithm \ref{Algo 3} is the smallest among the four algorithms when $n \ge 5000$.
Figure~\ref{fig:expr1_algo_comparison} (b) and Figure \ref{fig:expr1_detailed_violation} both illustrate that the proposed two corrections \eqref{beta} and \eqref{d_t} can effectively reduce the probability deviation with \blue{minor negative effects} on the optimality gap.

\subsubsection{Unbounded Setting}
In Experiment II, $k$ and $m$ are still set to 5 and 4. The confidence levels are also the same as those in Experiment I. For each value of $n$, we run 20 simulation trials.
In each trial, coefficients $\bm{c}_{tj}$, $\bm{\bar{a}}_{tj}$ and $\bm{K}_{tj}$ are i.i.d.\ sampled from the chi-square distributions which are unbounded.

\begin{figure}[!b]
\vspace{-0.3cm}
\begin{minipage}[b]{.48\linewidth}
  \centering
  \centerline{\includegraphics[width=6.0cm]{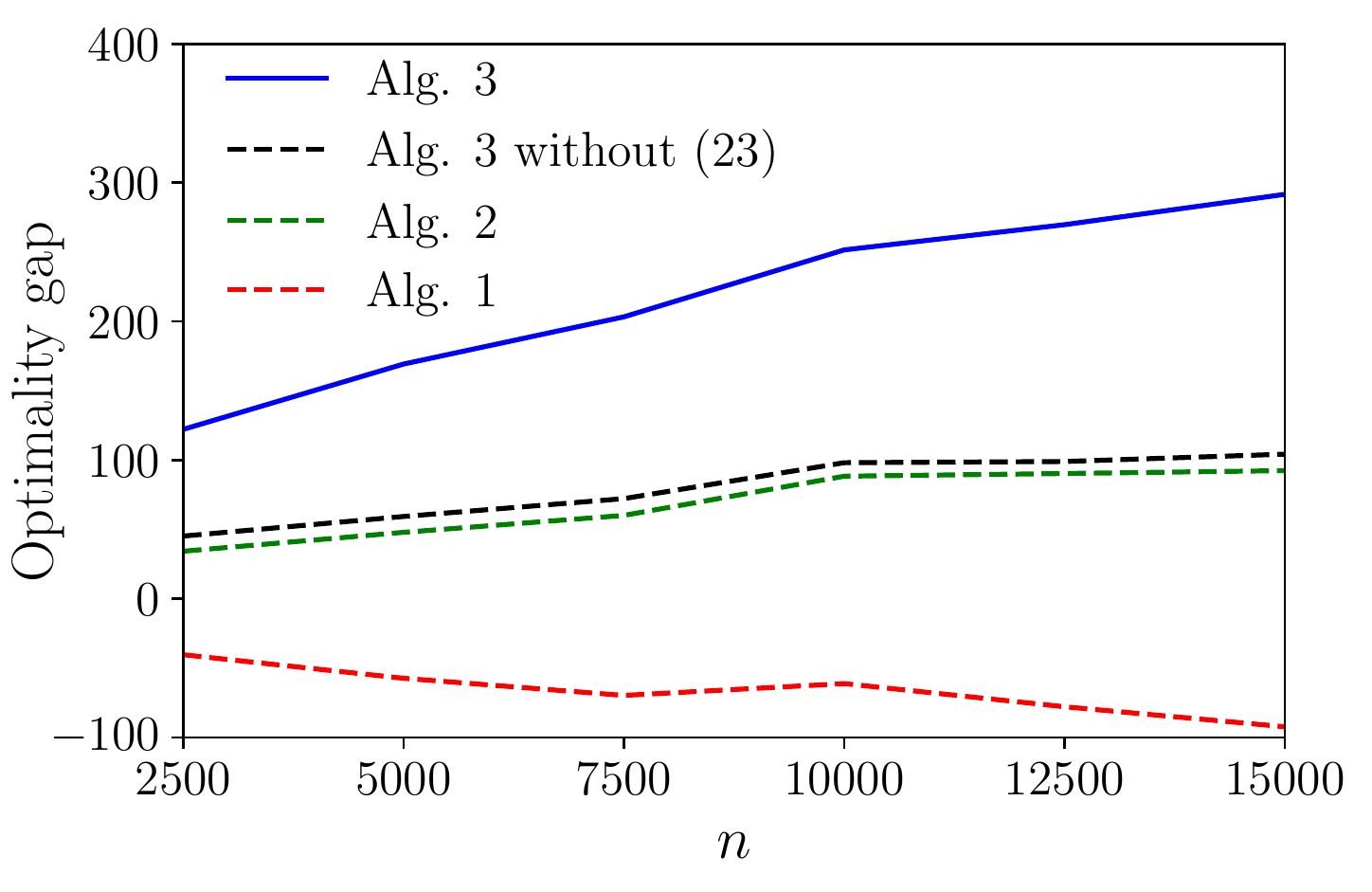}}
    \vspace{-0.2cm}
  \centerline{\small (a) optimality gap}\medskip
\end{minipage}
\hfill
\begin{minipage}[b]{0.48\linewidth}
  \centering
  \centerline{\includegraphics[width=6.0cm]{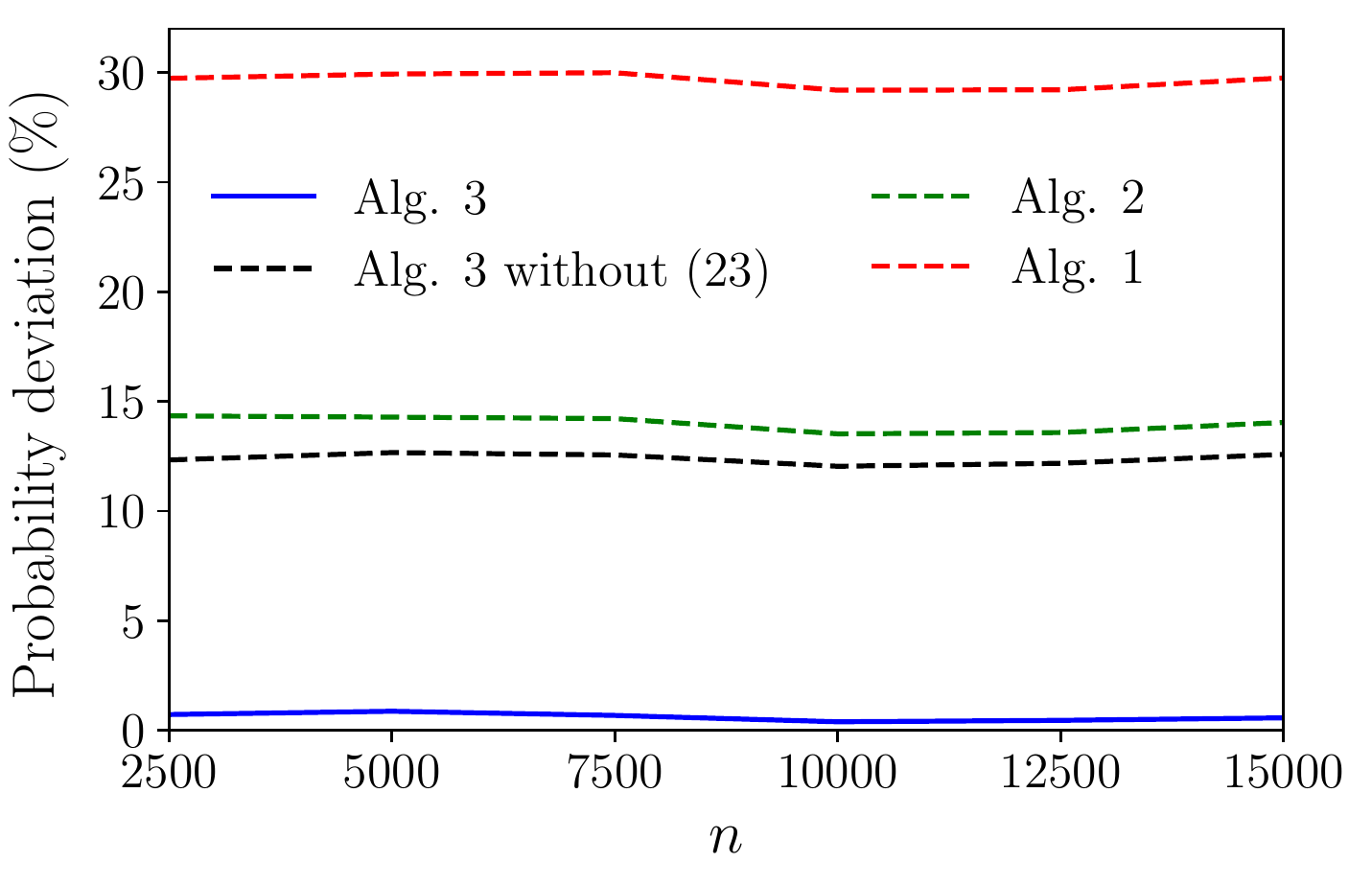}}
    \vspace{-0.2cm}
  \centerline{\small (b) probability deviation}\medskip
\end{minipage}
\vspace{-0.5cm}
\caption{\small optimality gap and probability deviation with chi-square i.i.d. input.}
\vspace{-0.3cm}
\label{fig:expr2_algo_comparison}
\end{figure}

Figure~\ref{fig:expr2_algo_comparison} shows the average optimality gap and probability deviation, and Figure~\ref{fig:expr2_detailed_violation} shows the detailed probability deviations of each chance constraint.
The results of Experiment II are similar to those of Experiment I: 
Algorithm \ref{Algo 3} has the smallest probability deviation, whereas Algorithm \ref{Algo 1} has the largest probability deviation.
Experiment II again verifies that corrections (\ref{beta}) and (\ref{d_t}) can effectively reduce the probability deviation.
\blue{Despite the fact that} Algorithm \ref{Algo 3} produces slightly larger optimality gap, its optimality gap is still approximately on the order of $\sqrt{n}$. 
In this experiment with unbounded input, Algorithm \ref{Algo 3} significantly reduces the probability deviations:
the probability deviations of the algorithms except Algorithm \ref{Algo 3} are larger than 10\%, while the probability deviation of Algorithm \ref{Algo 3} is less than 1\%.

\begin{figure}[!h]
\begin{minipage}[b]{.48\linewidth}
  \centering
  \centerline{\includegraphics[width=6.0cm]{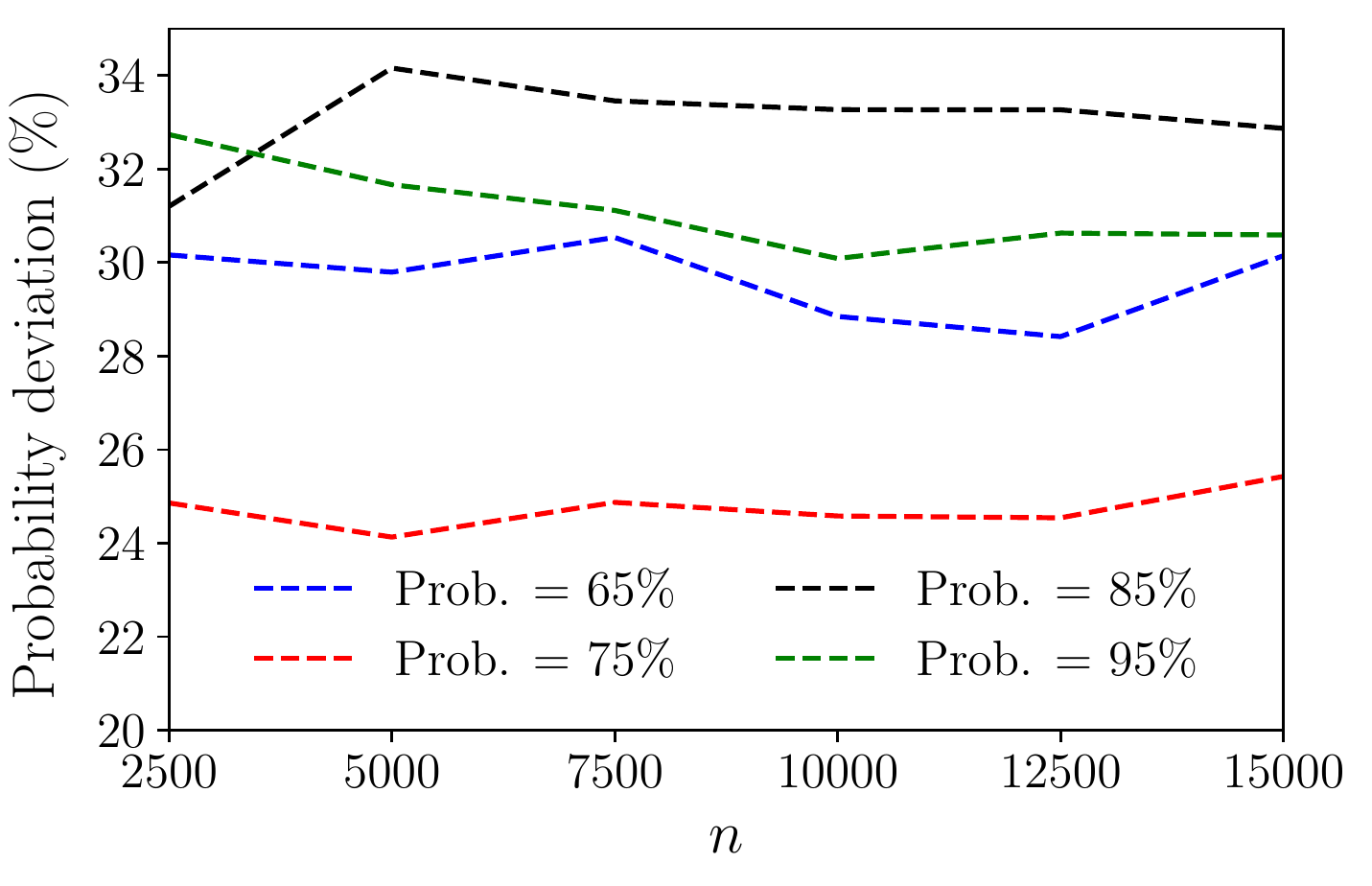}}
    \vspace{-0.2cm}
  \centerline{\small (a) Algorithm \ref{Algo 1}}\medskip
   \vspace{-0.2cm}
\end{minipage}
\hfill
\begin{minipage}[b]{0.48\linewidth}
  \centering
  \centerline{\includegraphics[width=6.0cm]{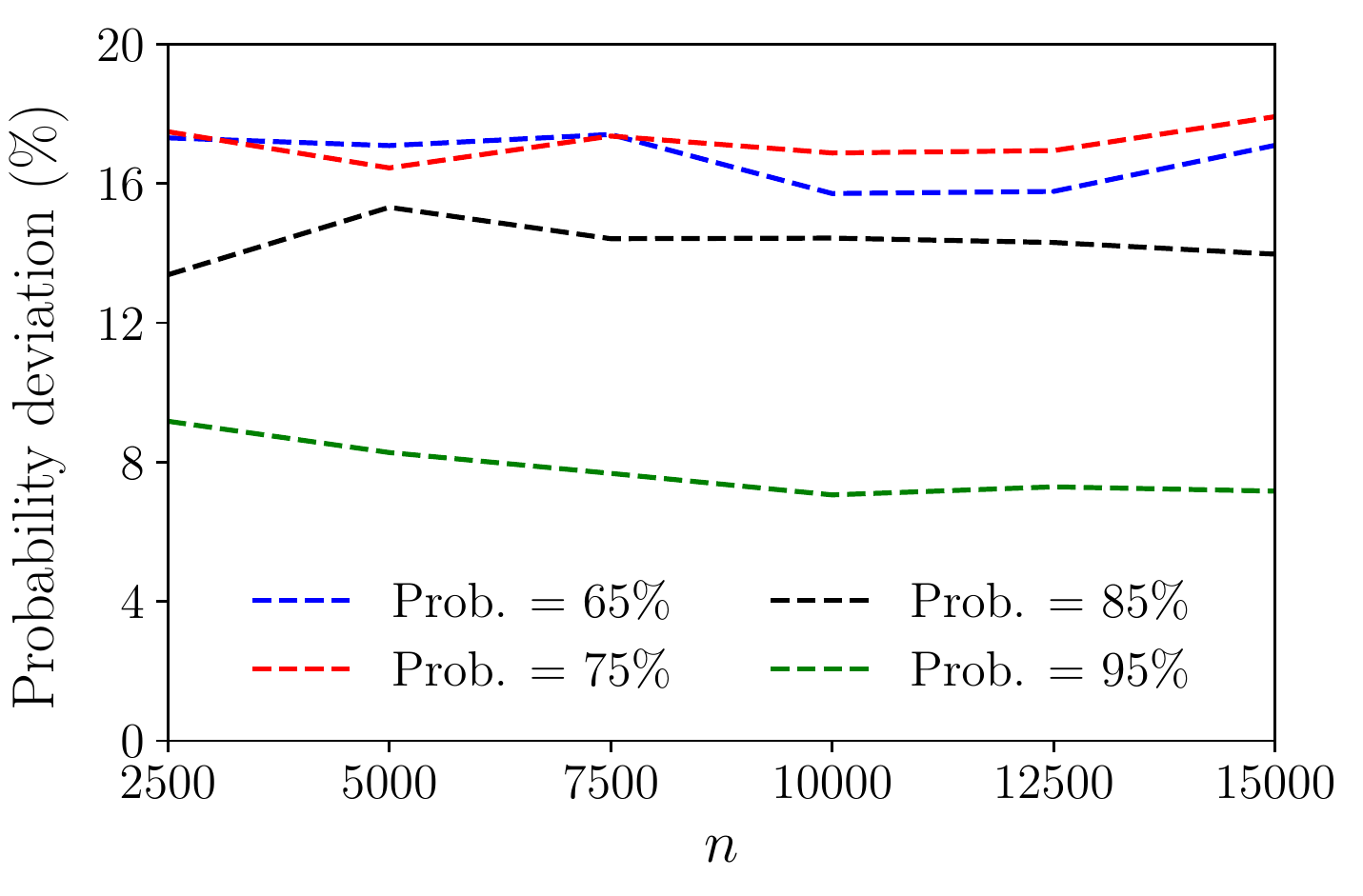}}
    \vspace{-0.2cm}
  \centerline{\small (b) Algorithm \ref{Algo 2}}\medskip
   \vspace{-0.2cm}
\end{minipage}
\begin{minipage}[b]{.48\linewidth}
  \centering
  \centerline{\includegraphics[width=6.0cm]{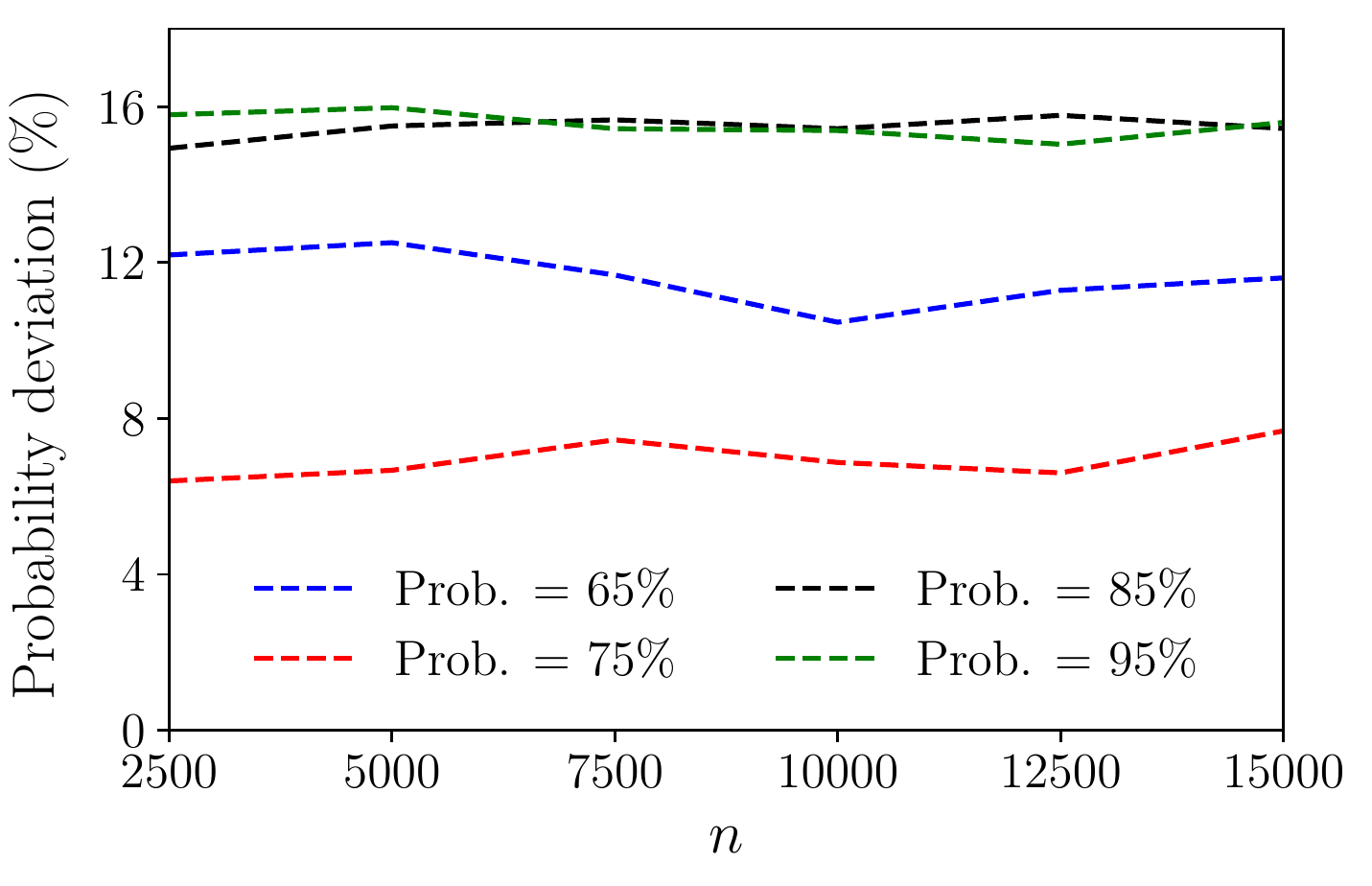}}
    \vspace{-0.2cm}
  \centerline{\small (c) Algorithm \ref{Algo 3} without correction (\ref{beta})}\medskip
\end{minipage}
\hfill
\begin{minipage}[b]{0.48\linewidth}
  \centering
  \centerline{\includegraphics[width=6.0cm]{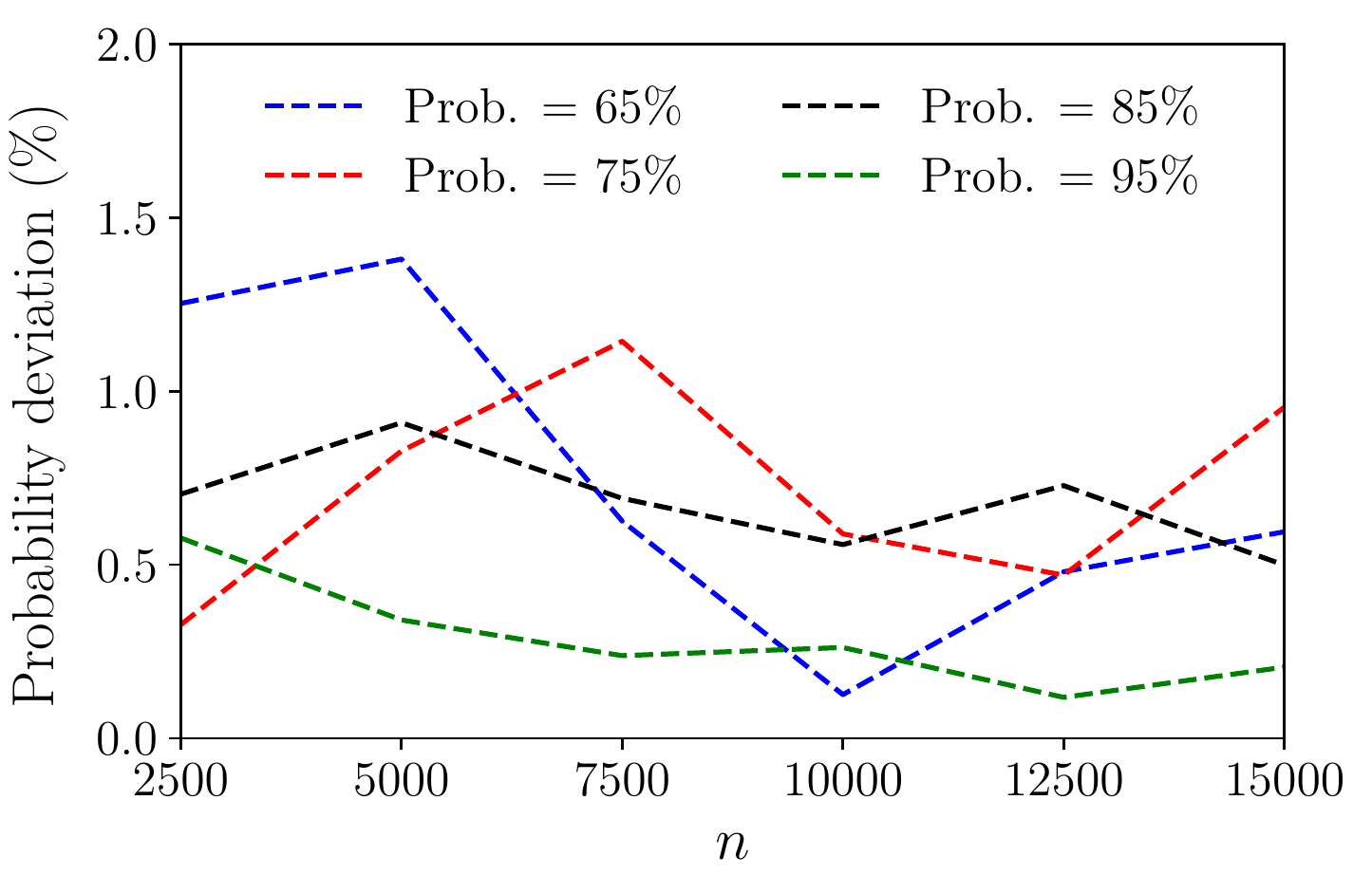}}
    \vspace{-0.2cm}
  \centerline{\small (d) Algorithm \ref{Algo 3}}\medskip
\end{minipage}
\vspace{-0.3cm}
\caption{\small probability deviation of each chance constraint with chi-square i.i.d. input.}
\vspace{-0.3cm}
\label{fig:expr2_detailed_violation}
\end{figure}
\begin{table}[!h]
	\small
	\renewcommand\arraystretch{1.35}
	\centering
	\caption{\small competitive ratios of Algorithm \ref{Algo 3} in the experiments.}
	\label{tab:competitive ratio}
	{
		\setlength{\tabcolsep}{5pt}{
			\begin{tabular}{p{60pt}<{\centering}p{30pt}<{\centering}p{30pt}<{\centering}
			p{30pt}<{\centering}p{30pt}<{\centering}p{30pt}<{\centering}p{30pt}<{\centering}}
				\toprule[1.5pt]
				\multirow{2}{*}{Experiment} & \multicolumn{6}{c}{Number of requests $n$} \\
				\cmidrule[0.75pt]{2-7}
				~ & 2500 & 5000 & 7500 & 10000 & 12500 & 15000\\
				\midrule[0.75pt]
				I & 95.90\% & 97.13\% & 97.69\% & 97.94\% & 98.14\% & 98.38\%\\
				II & 98.67\%& 99.09\%& 99.27\% & 99.32\% &99.42\% & 99.48\% \\
				\bottomrule[1.5pt]
			\end{tabular}
		}
	}
\end{table}

The competitive ratios of Algorithm 2 in Experiment I and II are larger than 95\%, with details provided in Table \ref{tab:competitive ratio}.

\subsection{CCP Problem with Conditional Expectation Constraints (\ref{eq:ce}) Only}
In this subsection, we apply the proposed algorithms to the CCP problem with only conditional expectation constraints and compare the performance in terms of the normalized constraint violation and the constraint violation. The input data are the same as these in Section \ref{sec:4.1}. The parameters $k$ and $m$ are also set to 5 and 4, and $\tilde{\gamma}_j$ in four conditional expectation constraints are set to 0.2, 0.3, 0.4 and 0.5, respectively. The scale of $\tilde{\gamma}_j$ matches the standard normal distribution: 0.2, 0.3, 0.4 and 0.5 correspond to 0.2, 0.3, 0.4 and 0.5 times the standard deviation, respectively. 

\begin{figure}[!htb]
\vspace{-0.2cm}
\begin{minipage}[b]{.48\linewidth}
  \centering
  \centerline{\includegraphics[width=6.0cm]{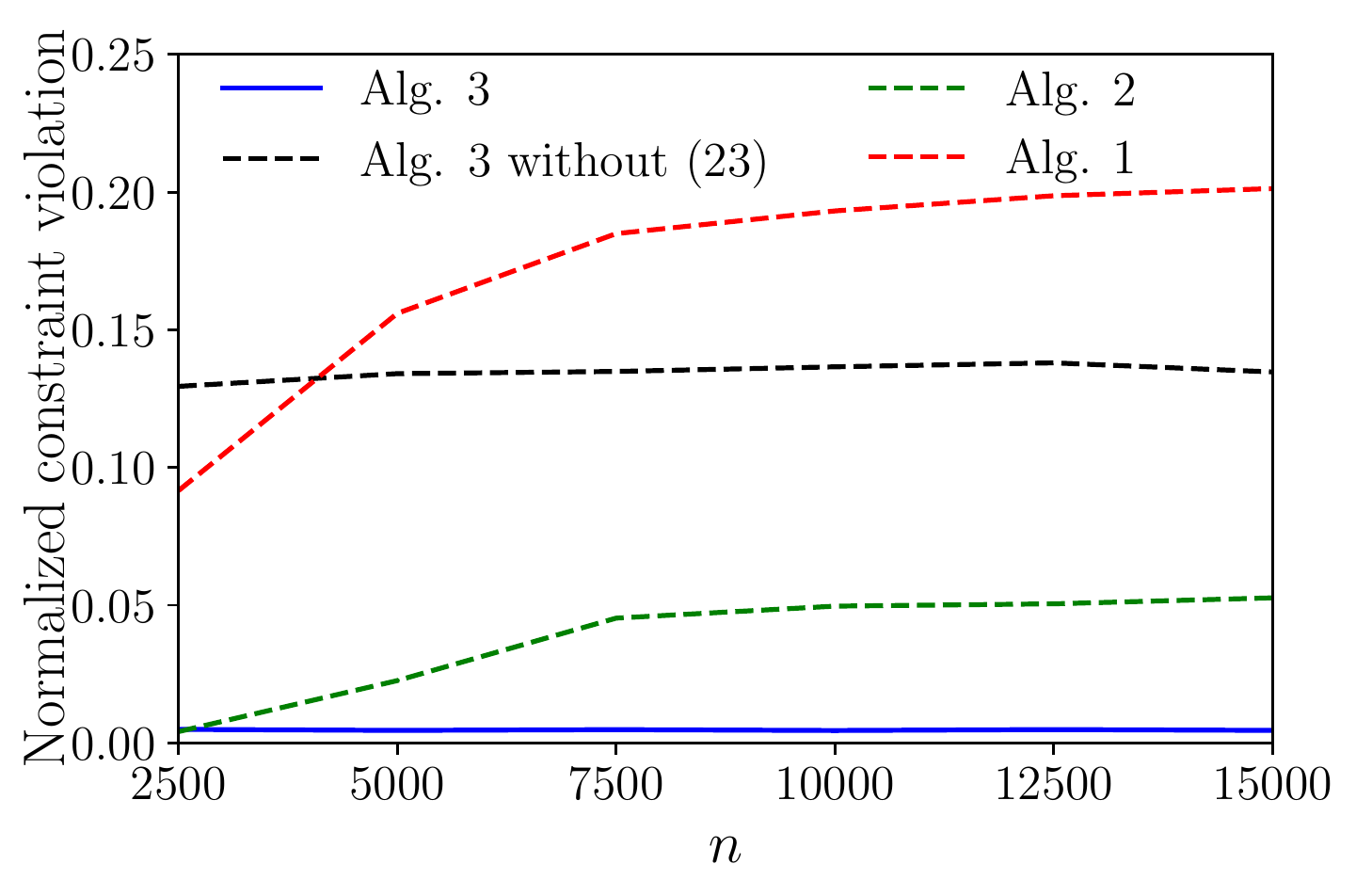}}
  \vspace{-0.15cm}
  \centerline{\small (a) normalized constraint violation}\medskip
\end{minipage}
\hfill
\begin{minipage}[b]{0.48\linewidth}
  \centering
  \centerline{\includegraphics[width=6.0cm]{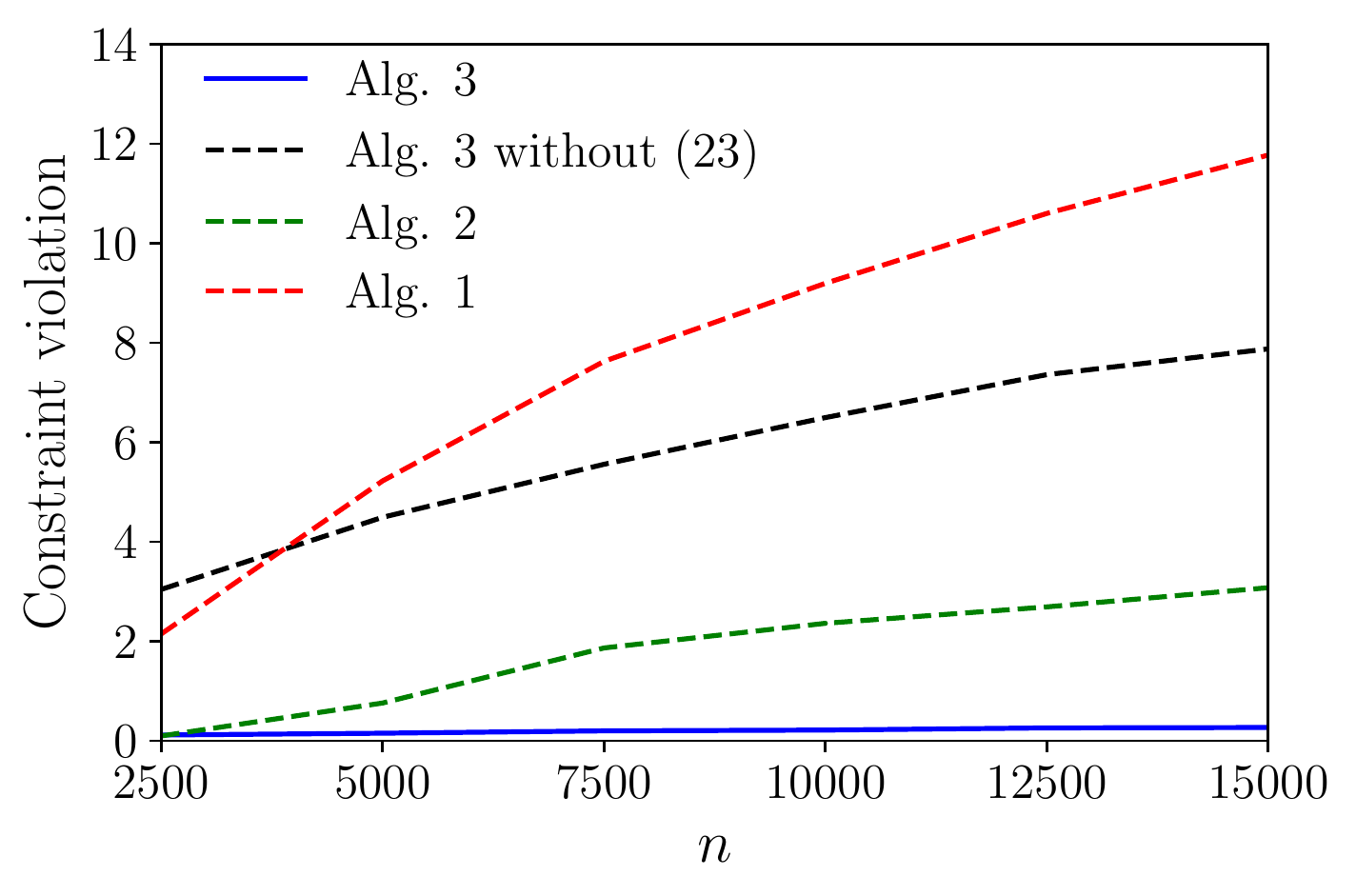}}
  \vspace{-0.15cm}
  \centerline{\small (b) constraint violation}\medskip
\end{minipage}
\vspace{-0.5cm}
\caption{normalized constraint violation and constraint violation with uniform i.i.d. input.}

\label{fig:cec_comparison_norm}

\begin{minipage}[b]{.48\linewidth}
  \centering
  \centerline{\includegraphics[width=6.0cm]{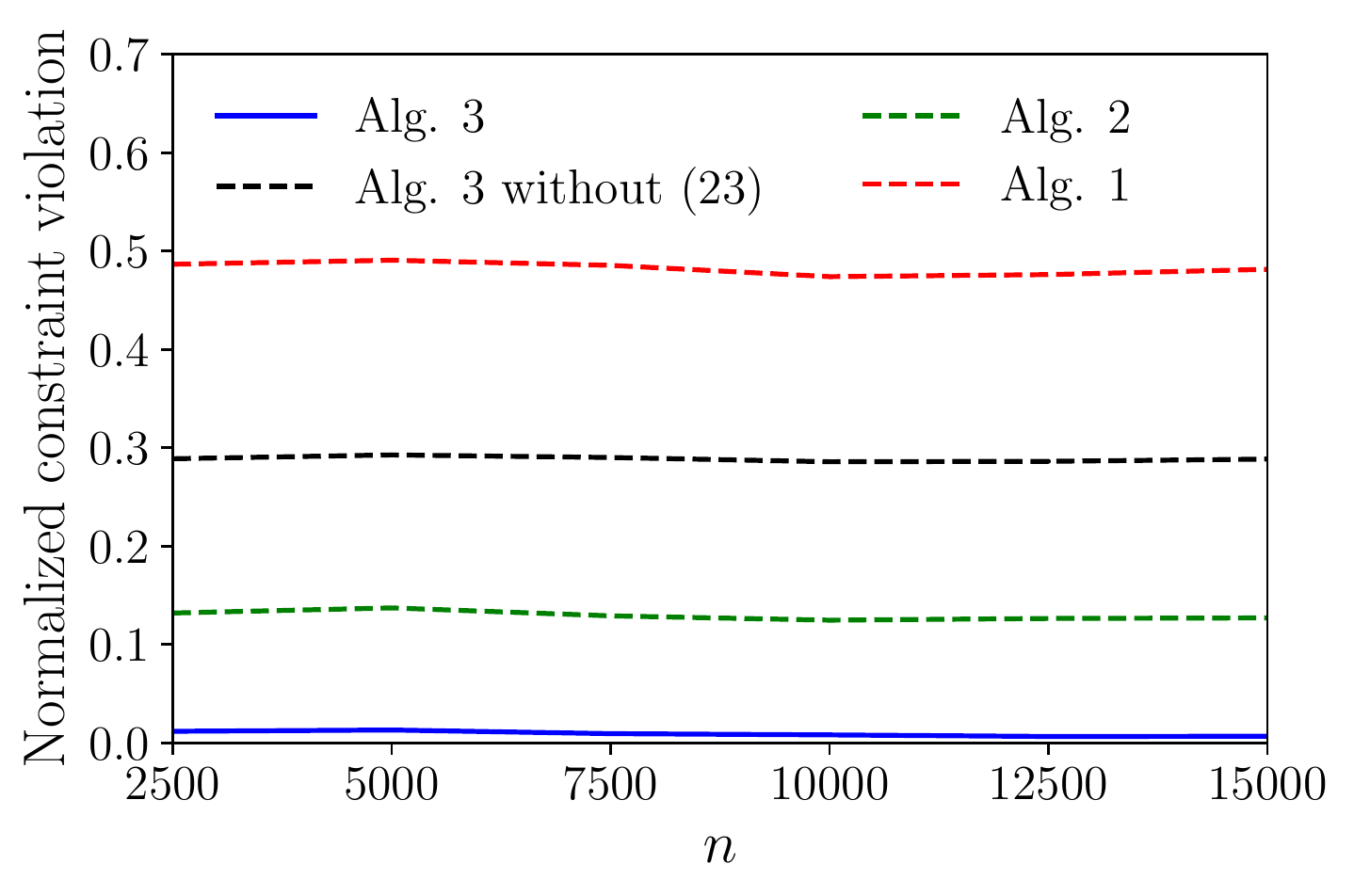}}
  \vspace{-0.15cm}
  \centerline{\small (a) normalized constraint violation}\medskip
\end{minipage}
\hfill
\begin{minipage}[b]{0.48\linewidth}
  \centering
  \centerline{\includegraphics[width=6.0cm]{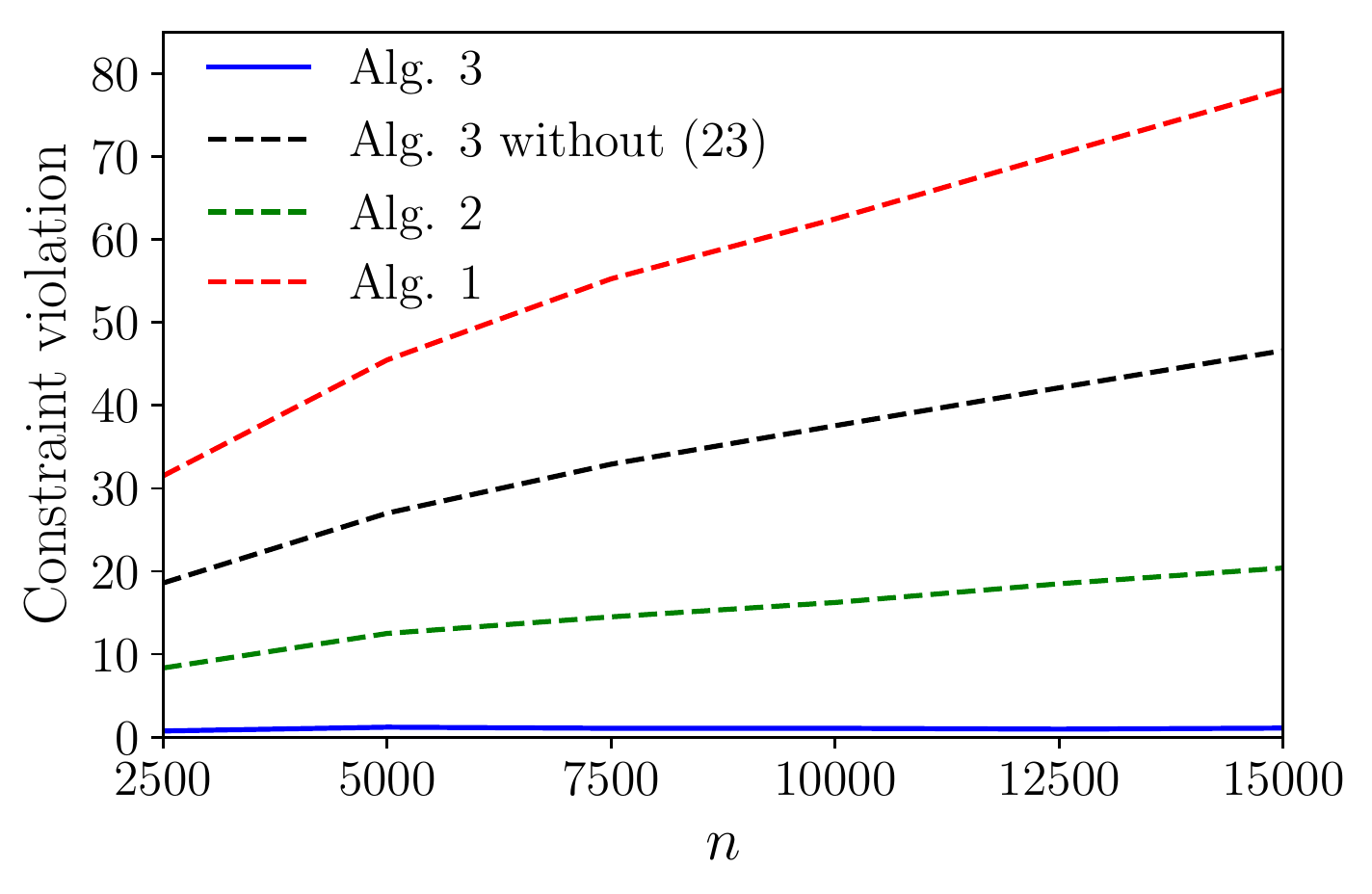}}
  \vspace{-0.15cm}
  \centerline{\small (b) constraint violation}\medskip
\end{minipage}
\vspace{-0.5cm}
\caption{normalized constraint violation and constraint violation with chi-square i.i.d. input.}
\vspace{-0.3cm}

\label{fig:cec_comparison}
\end{figure}

Figure \ref{fig:cec_comparison_norm} and Figure \ref{fig:cec_comparison} show the normalized constraint violation and constraint violation of four algorithms in the cases with uniform and chi-square inputs.
In both scenarios, Algorithm \ref{Algo 3} has the smallest normalized constraint violation and constraint violation while Algorithm \ref{Algo 1} has the largest violations. It is verified that the proposed corrections can effectively reduce the violation of conditional expectation constraints. Moreover, these experimental results show that the constraint violation of four algorithms in both cases is on the order of $\sqrt{n}$. This order is the same as stated in Theorem \ref{thm2}. Note that Theorem \ref{thm2} only analyzes the performance of Algorithm \ref{Algo 1} and its analysis relies on the boundedness assumption of the input. The theoretical analysis of conditional expectation constraints is still an open question.



\section{Conclusion}
\label{conclusion}
In this paper, we study the online stochastic RAP with chance constraints and conditional expectation constraints under mild assumptions.
First, we present a linearization method that decouples the non-linear term in second-order cone constraints and makes the online solution possible.
Next, we theoretically analyze the performance of the vanilla OPD algorithm compared with the offline solution of the stochastic RAP. Assuming the input data is bounded and i.i.d.~sampled, the expected optimality gap and constraint violation are both $O(\sqrt{n})$.
After that, we propose modified OPD algorithms with several heuristic corrections to reduce the probability derivation and constraint violation.
Numerical results verify the effectiveness of proposed heuristics and reveal that the probability derivation and constraint violation of the proposed Algorithm \ref{Algo 3} is very small. 
Additionally, numerical results confirm that Algorithm \ref{Algo 3} can output a near-optimal solution with small regret.
In conclusion, the proposed Algorithm \ref{Algo 3} is suitable for solving online CCP and can be applied to engineering applications such as online order fulfillment.

\bibliographystyle{plain} 
\bibliography{cas-refs}

\appendix
\section{Proof of Proposition \ref{prop:1}}
\label{sec:appendix prop 1}
First, $\{\bm{x} \in \{0,1\}^k|\bm{1}^\top\bm{x} \le 1\}$ = $\{\bm e_s \in \mathbb R^k|s=1,\dots,k\}$, where $\bm e_l$ is the unit vector with $l$-th coordinate being 1.

Then, for all $\bm e_l \in \{\bm e_s \in \mathbb R^k|s=1,\dots,k\}$, we have
\begin{equation}
    \sqrt{\bm{e}_l^\top \bm{K}_{tj} \bm{e}_l} = \sqrt{\bm K_{tjll}} = \bm \gamma_{tj}^\top \bm e_l,
\end{equation}
where $\bm K_{tjll}$ is $l$-th diagonal element of the matrix $\bm K_{tj}$ and $\bm \gamma_{tj} = (\sqrt{\bm K_{tj11}},\dots,$ $\dots,\sqrt{\bm K_{tjkk}})^\top$.

Hence, Proposition \ref{prop:1} holds.

\section{Proof of Theorem \ref{thm: lower bound of OPD}}
\label{sec:appendix thm lower bound opt gap}
In this section, we denote the output of Algorithm \ref{Algo 1} as $\bm {\tilde x}_t$ to distinguish it from the decision variables $\bm x_t$.

First, we present the following proposition.

\begin{prop}\label{prop:strong convex}
    Under Assumption \ref{assumption 1}, the problems \eqref{Linear-relaxedCCP} and \eqref{Linear-NLCCP} have strong duality.
\end{prop}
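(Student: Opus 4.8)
The plan is to verify that each problem is a convex program and then to exhibit a strictly feasible point, so that Slater's condition delivers strong duality. First I would observe that both problems maximize the linear (hence concave) objective $\sum_{t=1}^n \bm c_t^\top \bm x_t$ over a convex feasible set. For the LP \eqref{Linear-relaxedCCP} this is immediate, since every constraint is affine. For the SOCP \eqref{Linear-NLCCP}, the only non-affine constraints are the second-order cone constraints $g_j(\bm x) = \sum_{t=1}^n \bar{\bm a}_{tj}^\top \bm x_t + \psi_j \sqrt{\sum_{t=1}^n \bm x_t^\top \bm K_{tj}\bm x_t} \le b_j$; because each $\bm K_{tj}$ is a covariance matrix it is positive semidefinite, so $\bm x \mapsto \sqrt{\sum_{t=1}^n \bm x_t^\top \bm K_{tj}\bm x_t}$ is a seminorm of the stacked vector and hence convex, and with $\psi_j \ge 0$ the function $g_j$ is convex. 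Thus \eqref{Linear-NLCCP} is a convex program.

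Next I would invoke the refined form of Slater's condition: for a convex program with a concave objective, strong duality holds provided there is a point feasible for all constraints and strictly feasible for the non-affine ones. I would take the candidate $\bm x = \bm 0$. The affine constraints $\bm 1^\top \bm x_t \le 1$ and $\bm x_t \ge \bm 0$ are satisfied at $\bm x = \bm 0$ (and need not hold strictly). For the non-affine SOC constraints, $g_j(\bm 0) = 0$, while by Assumption \ref{assumption 1}(c) we have $b_j = n d_j$ with $d_j$ bounded below by a positive constant, so $b_j > 0 = g_j(\bm 0)$; hence every non-affine constraint is strictly satisfied at $\bm 0$. Slater's condition therefore holds for \eqref{Linear-NLCCP}, giving strong duality.

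For the LP \eqref{Linear-relaxedCCP}, the same point $\bm 0$ is feasible, and since every constraint is affine, feasibility together with finiteness of the optimal value yields strong duality by the linear programming duality theorem; finiteness follows from Assumption \ref{assumption 1}(b), which makes $\bm c_t$ bounded, and from the feasible region being contained in the bounded set $\{\bm x_t : \bm x_t \ge \bm 0,\ \bm 1^\top \bm x_t \le 1\}$. This establishes strong duality for both problems.

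The main obstacle is not any single estimate but making the Slater argument airtight: one must correctly separate the affine constraints (which are allowed to be tight at the Slater point, crucial here because $\bm x = \bm 0$ lies on the boundary of $\bm x_t \ge \bm 0$) from the genuinely nonlinear SOC constraints, and one must confirm $\psi_j \ge 0$ and $\bm K_{tj} \succeq 0$ so that $g_j$ is convex. The positivity $b_j > 0$ from Assumption \ref{assumption 1}(c) is precisely what produces the strict slack $g_j(\bm 0) = 0 < b_j$ required for the non-affine constraints.
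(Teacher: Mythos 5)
Your proof is correct and takes essentially the same route as the paper: both arguments exhibit $\bm x = \bm 0$ as a Slater point, using $b_j = n d_j > 0$ from Assumption~\ref{assumption 1}(c) to obtain strict slack in the second-order cone constraints of \eqref{Linear-NLCCP}. If anything, your version is more careful than the paper's one-line argument, since you explicitly invoke the refined Slater condition to handle the affine constraints $\bm x_t \ge \bm 0$ that are tight at $\bm x = \bm 0$, and you dispatch the LP \eqref{Linear-relaxedCCP} via ordinary LP duality rather than Slater.
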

\begin{proof}
Setting $\bm x_t = \bm 0, \forall t=1,\dots,n$, we have
\begin{equation}
\begin{aligned}
    &\sum_{t = 1}^n \left(\bar{\bm{a}}_{tj}^\top + \frac{\psi_j}{\sqrt{n}} \bm{\gamma}^\top_{tj}  \right) \bm{x}_t = 0 \le b_j,\forall j = 1,\dots,m\\
    &\sum_{t=1}^n \bar{\boldsymbol{a}}_{tj}^\top \boldsymbol{x}_t + \psi_j\sqrt{\sum_{t=1}^n \boldsymbol{x}_t^\top \bm{K}_{tj} \boldsymbol{x}_t} = 0 < b_j, \forall j = 1,\dots,m\\
    &\bm 1^\top\bm x_t \le 1, \bm x_t \ge \bm 0, \forall t = 1,\dots,n
\end{aligned}
\end{equation}
which means that Slater's conditions of \eqref{Linear-relaxedCCP} and \eqref{Linear-NLCCP} are satisfied. Thus, strong duality holds.
\end{proof}

{\noindent \bf{Proof of Theorem \ref{thm: lower bound of OPD}}}
\vspace{0.4cm}

Based on the strong duality of linear problem \eqref{Linear-relaxedCCP}, we have 
\begingroup
\allowdisplaybreaks
\begin{align}
	\hat{R}_n^{LP} =&\min_{\bm p \ge 0} \max_{\bm 1^\top \bm x \le 1, \bm x \ge \bm 0} \sum_{j=1}^m p_j b_j + \sum_{t=1}^n (\bm c_t^\top - \sum_{j=1}^m p_j(\bar {\bm a}_{tj}^\top+\frac{\psi_j}{\sqrt{n}} \bm \gamma_{tj}^\top))\bm{x}_t\notag \\
	=& \max_{\bm 1^\top \bm x \le 1, \bm x \ge \bm 0}\min_{\bm p \ge 0} \sum_{j=1}^m p_j b_j + \sum_{t=1}^n (\bm c_t^\top - \sum_{j=1}^m p_j(\bar {\bm a}_{tj}^\top+\frac{\psi_j}{\sqrt{n}} \bm \gamma_{tj}^\top))\bm{x}_t\notag \\
	= & \max_{\bm 1^\top \bm x \le 1, \bm x \ge \bm 0} \sum_{j=1}^m \hat{p}_j^{LP} b_j + \sum_{t=1}^n (\bm c_t^\top - \sum_{j=1}^m \hat{p}_j^{LP}(\bar {\bm a}_{tj}^\top + \frac{\psi_j}{\sqrt{n}} \bm \gamma_{tj}^\top))\bm{x}_t \\
	\ge & \sum_{j=1}^m \hat{p}_j^{LP} b_j + \sum_{t=1}^n (\bm c_t^\top - \sum_{j=1}^m \hat{p}_j^{LP}(\bar {\bm a}_{tj}^\top+\frac{\psi_j}{\sqrt{n}} \bm \gamma_{tj}^\top))\tilde{\bm{x}}_t\notag\\
	=& \sum_{t=1}^n \bm c_t^\top \tilde{\bm x}_t + \sum_{j=1}^m \hat{p}_j^{LP}(b_j - (\bar {\bm a}_{tj}^\top+\frac{\psi_j}{\sqrt{n}} \bm \gamma_{tj}^\top)\tilde{\bm x}_t)\notag
\end{align}
where $\hat{p}_j^{LP}$ is the optimal dual value of \eqref{Linear-relaxedCCP}. The second equation holds because of strong duality \cite{boyd2004convex}. 

Next, since $\hat p_j^{LP}$ is $O(1)$ \cite{li2019online} and 
    \begin{align}
	& \mathbb{E}\left[\sum_{j=1}^m\left(b_j - (\bar {\bm a}_{tj}^\top+\frac{\psi_j}{\sqrt{n}}\sum_{t=1}^n \bm \gamma_{tj}^\top)\tilde{\bm x}_t\right)\right]\\
	\ge & -\mathbb{E}\left[\left\|\left(\sum_{t=1}^n \tilde{\bm{A}}_{t} \tilde{\bm{x}}_t-\bm{b}\right)^+\right\|_1\right]
	\ge -\mathbb{E}\left[\sqrt{m}\left\|\left(\sum_{t=1}^n \tilde{\bm{A}}_{t} \tilde{\bm{x}}_t-\bm{b}\right)^+\right\|_2\right]\notag \\
	\ge & -O(\sqrt{n})\notag,
\end{align}
where the last inequality comes from Theorem \ref{thm1},
we get
\begin{equation}
	\mathbb{E}\left[\hat{R}_n^{LP}-\sum_{t=1}^n\bm c_t^\top \tilde{\bm x}_t\right] \ge -O(\sqrt{n}).
\end{equation}

\section{Proof of Lemma \ref{lemma1}}
\label{sec:appendix lemma1}

Consider the dual problem of the SOCP problem (\ref{Linear-NLCCP}):
\begin{equation}\label{prob:dual SOCP}
\begin{aligned}
	\min_{\bm p \ge 0} \max_{\bm 1^\top  \bm x \le 1, \bm x \ge \bm 0} \sum_{j=1}^m p_j b_j + \sum_{t=1}^n (\bm c_t^\top - \sum_{j=1}^m p_j\bar {\bm a}_{tj}^\top)\bm{x}_t -\sum_{j=1}^m p_j\psi_j\sqrt{\sum_{t=1}^n \bm{x}_t^\top \bm{K}_{tj} \bm{x}_t}.
\end{aligned}
\end{equation}
The optimal value of $\bm p$ has the following property.

\begin{prop}\label{prop: dual value boundedness}
	Under Assumption \ref{assumption 1}, the optimal solution of the problem (\ref{prob:dual SOCP}) is bounded:
\begin{equation}
	\hat{\bm p}^{SOCP} \in \left\{\bm p \in \mathbb R^m \bigg|\bm p \ge \bm0, \bm 1^\top \bm p \le \frac{\bar{c}}{\underline{d}}\right\}
\end{equation}
where $\hat{\bm p}^{SOCP}$ is the optimal solution, $\bar c$ is the upper bound of $c_{tl}$ and $\underline d$ is the lower bound of $d_j$, which means $\bar c \ge c_{tl}, \ \forall t = 1,\dots,n,\  l=1,\dots,k$ and $d_j = b_j/n \ge \underline d > 0, \ \forall j = 1,\dots,m$.
\end{prop}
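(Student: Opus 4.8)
The plan is to exploit the fact that the all-zero price $\bm p = \bm 0$ is dual-feasible and to compare the dual objective at $\bm 0$ against its value at prices with large $\ell_1$-norm. Write $D(\bm p)$ for the dual objective in \eqref{prob:dual SOCP}, i.e.\ the value of the inner maximization together with the linear term $\sum_{j=1}^m p_j b_j$; as a pointwise maximum of affine functions of $\bm p$ it is convex, so it suffices to locate its minimizer over $\{\bm p \ge \bm 0\}$ and show every minimizer lands in the stated set.

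First I would upper bound $D(\bm 0)$. At $\bm p = \bm 0$ the consumption and square-root penalty terms vanish and the inner problem decouples across $t$, so $D(\bm 0) = \sum_{t=1}^n \max_{\bm 1^\top \bm x_t \le 1,\, \bm x_t \ge \bm 0} \bm c_t^\top \bm x_t$. Each summand equals $\max\{0, \max_l c_{tl}\} \le \bar c$ by Assumption \ref{assumption 1}(b), which yields $D(\bm 0) \le n \bar c$.

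Next I would lower bound $D(\bm p)$ for an arbitrary $\bm p \ge \bm 0$. Choosing $\bm x = \bm 0$ as a feasible point in the inner maximization makes both the revenue-minus-consumption term $\sum_t (\bm c_t^\top - \sum_j p_j \bar{\bm a}_{tj}^\top)\bm x_t$ and the square-root penalty vanish, so the inner maximum is at least $0$. Hence $D(\bm p) \ge \sum_{j=1}^m p_j b_j$, and since $b_j = n d_j \ge n \underline d$ and $p_j \ge 0$ this gives $D(\bm p) \ge n \underline d \, \bm 1^\top \bm p$.

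Combining the two bounds finishes the argument: if $\bm 1^\top \bm p > \bar c / \underline d$, then $D(\bm p) \ge n \underline d\, \bm 1^\top \bm p > n \bar c \ge D(\bm 0)$, so such a $\bm p$ is strictly worse than the feasible point $\bm 0$ and cannot be a minimizer; therefore every optimal $\hat{\bm p}^{SOCP}$ satisfies $\bm 1^\top \hat{\bm p}^{SOCP} \le \bar c/\underline d$. The one point needing care is the existence of an attained dual optimum: the bound $D(\bm p) \ge n\underline d\,\bm 1^\top\bm p$ shows $D$ is coercive on $\{\bm p \ge \bm 0\}$, so the minimization may be restricted to the compact set $\{\bm p \ge \bm 0 : \bm 1^\top \bm p \le \bar c/\underline d\}$, on which the convex $D$ attains its minimum (consistent with the strong duality of Proposition \ref{prop:strong convex}). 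I expect this coercivity and attainment bookkeeping to be the main obstacle, the inequalities themselves being routine.
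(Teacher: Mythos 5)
Your proof is correct and takes essentially the same approach as the paper: both arguments lower-bound the dual objective at an arbitrary $\bm p \ge \bm 0$ by plugging $\bm x = \bm 0$ into the inner maximization (giving $\sum_{j} p_j b_j \ge n\underline{d}\,\bm 1^\top \bm p$), upper-bound the value at $\bm p = \bm 0$ by $n\bar c$, and conclude that any price with $\bm 1^\top \bm p > \bar c/\underline{d}$ is strictly worse than $\bm 0$ and hence cannot be optimal. Your closing remark on coercivity and attainment merely makes explicit an existence point the paper leaves implicit; it does not change the argument.
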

\begin{proof}
Use $f(\bm p)$ to denote the value of dual function
\begin{equation}
	f(\bm p):=\max_{\bm 1^\top \bm x \le 1, \bm x \ge \bm 0} \sum_{j=1}^m p_j b_j + \sum_{t=1}^n (\bm c_t^\top - \sum_{j=1}^m p_j\bar {\bm a}_{tj}^\top)\bm{x}_t
 -\sum_{j=1}^m p_j\psi_j\sqrt{\sum_{t=1}^n \bm{x}_t^\top \bm{K}_{tj} \bm{x}_t},
\end{equation}
and use $L(\bm p, \bm x)$ to denote the value of Lagrangian function

\begin{equation}
	L(\bm p, \bm x):=\sum_{j=1}^m p_j b_j + \sum_{t=1}^n (\bm c_t^\top - \sum_{j=1}^m p_j\bar {\bm a}_{tj}^\top)\bm{x}_t
	 -\sum_{j=1}^m p_j\psi_j\sqrt{\sum_{t=1}^n \bm{x}_t^\top \bm{K}_{tj} \bm{x}_t}.
\end{equation}	

	We only need to prove $\bm d^\top \hat{\bm p}^{SOCP} \le \bar c$. If $\bm d^\top \hat{\bm p}^{SOCP} > \bar c$, then
\begin{equation}
\begin{aligned}
	f(\hat{\bm p}^{SOCP})& \ge L(\hat{\bm p}^{SOCP}, \bm0) = \sum_{j=1}^m \hat{p}_j^{SOCP}b_j \\
	& =n\bm d^\top \hat{\bm p}^{SOCP} > n\bar c\\
	& \ge \max_{\bm 1^\top \bm x \le 1, \bm x \ge \bm 0}\sum_{t=1}^n \bm c_t^\top \bm x_t = f(\bm 0) 
\end{aligned}
\end{equation}
which contradicts that $\hat{\bm p}^{SOCP}$ is the optimal solution of the dual problem \eqref{prob:dual SOCP} that is $\min_{\bm p \ge \bm 0}\ f(\bm p)$. 

Hence, $\bm d^\top \hat{\bm p}^{SOCP} \le \bar c$. Evidently, $\bm 1^\top \hat{\bm p}^{SOCP} \le \bar{c} / \underline{d}$.
\end{proof}

{\noindent \bf{Proof of Lemma \ref{lemma1}}}
\begin{proof}
	(a) For the optimality gap, $\hat{R}_n^{SOCP} \le \sum_{t=1}^n c_t^\top \hat{\bm x}_t^{LP}$. Specifically, 
  \begin{align}\label{B6}
	\hat{R}_n^{SOCP} =&\min_{\bm p \ge 0} \max_{\bm 1^\top \bm x \le 1, \bm x \ge \bm 0} \sum_{j=1}^m p_j b_j +\notag\\
	&\sum_{t=1}^n (\bm c_t^\top - \sum_{j=1}^m p_j\bar {\bm a}_{tj}^\top)\bm{x}_t -\sum_{j=1}^m p_j\psi_j\sqrt{\sum_{t=1}^n \bm{x}_t^\top \bm{K}_{tj} \bm{x}_t}\notag\\
	\le & \min_{\bm p \ge 0} \max_{\bm 1^\top \bm x \le 1, \bm x \ge \bm 0} \sum_{j=1}^m p_j b_j+\\
	& \sum_{t=1}^n (\bm c_t^\top - \sum_{j=1}^m p_j\bar {\bm a}_{tj}^\top)\bm{x}_t-\sum_{j=1}^m p_j\frac{\psi_j}{\sqrt{n}}\sum_{t=1}^n \bm \gamma_{tj}^\top \bm{x}_t \notag\\
	 =& \sum_{t=1}^n\bm c_t^\top \hat{\bm x}_t^{LP}\notag.
\end{align}  

The first equation is from strong duality of \eqref{Linear-NLCCP} (Proposition \ref{prop:strong convex}).

Next is to prove $\hat{R}_n^{SOCP} \ge\sum_{t=1}^n c_t^\top \hat{\bm x}_t^{LP} - O(\sqrt{n})$.
We have \vspace{-0.1cm}
\begingroup
\allowdisplaybreaks
\begin{align}\label{B7}
	\hat{R}_n^{SOCP}= & \max_{\bm 1^\top \bm x \le 1, \bm x \ge \bm 0} \sum_{j=1}^m \hat{p}_j^{SOCP} b_j + \sum_{t=1}^n (\bm c_t^\top - \sum_{j=1}^m \hat{p}_j^{SOCP}\bar {\bm a}_{tj}^\top)\bm{x}_t \notag \\
	&-\sum_{j=1}^m \hat{p}_j^{SOCP}\psi_j\sqrt{\sum_{t=1}^n \bm{x}_t^\top \bm{K}_{tj} \bm{x}_t} \notag\\
	\ge & \max_{\bm 1^\top \bm x \le 1, \bm x \ge \bm 0} \sum_{j=1}^m \hat{p}_j^{SOCP} b_j + \sum_{t=1}^n (\bm c_t^\top - \sum_{j=1}^m \hat{p}_j^{SOCP}\bar {\bm a}_{tj}^\top)\bm{x}_t \notag\\
	&-\sum_{j=1}^m \hat{p}_j^{SOCP}\left(\frac{\psi_j}{\sqrt{n}}\sum_{t=1}^n \sqrt{\bm{x}_t^\top \bm{K}_{tj} \bm{x}_t} +\psi_j\sqrt{\sum_{t=1}^n \bm{x}_t^\top \bm{K}_{tj} \bm{x}_t}\right)\\
	\ge & \max_{\bm 1^\top \bm x \le 1, \bm x \ge \bm 0} \sum_{j=1}^m \hat{p}_j^{SOCP} b_j + \sum_{t=1}^n (\bm c_t^\top - \sum_{j=1}^m \hat{p}_j^{SOCP}\bar {\bm a}_{tj}^\top)\bm{x}_t \notag\\
	&-\sum_{j=1}^m \hat{p}_j^{SOCP}\frac{\psi_j}{\sqrt{n}}\sum_{t=1}^n \bm \gamma_{tj}^\top \bm{x}_t - 
	\frac{\bar c}{\underline{d}}\bar{\psi} \bar \gamma \sqrt{n}\notag\\
	\ge &\max_{\bm 1^\top \bm x \le 1, \bm x \ge \bm 0} \sum_{j=1}^m \hat{p}_j^{LP} b_j + \sum_{t=1}^n (\bm c_t^\top - \sum_{j=1}^m \hat{p}_j^{LP}\bar {\bm a}_{tj}^\top)\bm{x}_t \notag\\
	&-\sum_{j=1}^m \hat{p}_j^{LP}\frac{\psi_j}{\sqrt{n}}\sum_{t=1}^n \bm \gamma_{tj}^\top \bm{x}_t -\frac{\bar c}{\underline{d}}\bar{\psi} \bar \gamma \sqrt{n}\notag\\
	= & \sum_{t=1}^n c_t^\top \hat{\bm x}_t^{LP} - O(\sqrt{n})\notag
\end{align}
\endgroup
where $\bar{\gamma} = \sqrt{\bar K}$, $\bar K$ is the upper bound of the diagonal elements of $\bm K_{tj}$, $\bar \psi = \max_{j}{\psi_j}$ and $\hat{\bm p}^{LP}$ is the optimal solution of the dual problem of (\ref{Linear-relaxedCCP}). In (\ref{B7}), the second inequality comes from Proposition \ref{prop:1}, Proposition \ref{prop: dual value boundedness} and the boundedness of $\bm K_{tj}$. The third inequality is valid because the dual problem \vspace{-0.1cm}
\begin{equation}
	\min_{\bm p \ge \bm 0}\max_{\bm 1^\top \bm x \le 1, \bm x \ge \bm 0} \sum_{j=1}^m p_j b_j + \sum_{t=1}^n (\bm c_t^\top - \sum_{j=1}^m p_j (\bar {\bm a}_{tj}^\top+\frac{\psi_j}{\sqrt{n}}\bm \gamma_{tj}^\top ))\bm{x}_t
\end{equation}
achieves its minimum value when $\bm p = \hat{\bm p}^{LP}$.

According to (\ref{B6}) and (\ref{B7}), 
\begin{equation}
	\bigg|\hat{R}_n^{SOCP}-\sum_{t = 1}^n \bm{c}_t^\top \hat{\bm{x}}_t^{LP}\bigg| \le O(\sqrt{n})
\end{equation}
holds for all $n$.

(b) For the resource consumption gap, we have 
\begin{equation}
\begin{aligned}
	g_j(\bm x)-\sum_{t=1}^n \tilde{\bm{a}}_{tj}\bm x_t & = \psi_j\sqrt{\sum_{t=1}^n \boldsymbol{x}_t^\top \bm{K}_{tj} \boldsymbol{x}_t} - \sum_{t=1}^n \frac{\psi_j}{\sqrt{n}}\bm \gamma_{tj}^\top \bm{x}_t\\
	& \le \psi_j\sqrt{\sum_{t=1}^n \boldsymbol{x}_t^\top \bm{K}_{tj} \bm{x}_t} \le \psi_j \sqrt{\bar{K}}\sqrt{n}.
\end{aligned}	
\end{equation}
Hence,
\begin{equation}
    \left\|\bm g\left({\bm{x}}\right)-\sum_{t=1}^n\tilde{\bm A}_t \bm x_t\right\|_2 \le O(\sqrt{n})
\end{equation}
holds for all $n$.
\end{proof}

\section{Proof of Theorem \ref{thm2}}
\label{sec:appendix thm2}
\begin{proof}
	According to Lemma \ref{lemma1} and Theorem \ref{thm1}, we have
\begin{equation}
\begin{aligned}
	& ~\mathbb E \left[\hat{R}_n^{SOCP}-\sum_{t = 1}^n \bm{c}_t^\top \bm{x}_t\right] \\
	= &~\mathbb E \left[\hat{R}_n^{SOCP}-\sum_{t = 1}^n \bm{c}_t^\top \hat{\bm{x}}_t^{LP}\right]
	+ \mathbb E \left[\sum_{t = 1}^n \bm{c}_t^\top \hat{\bm{x}}_t^{LP}-\sum_{t = 1}^n \bm{c}_t^\top \bm{x}_t\right]\\
	\le &~ 0 + O(\sqrt{n}) =  O(\sqrt{n}).
\end{aligned}
\end{equation}

According to Lemma \ref{lemma1} and Therorem \ref{thm: lower bound of OPD}, we have
\begin{align}
	& ~\mathbb E \left[\hat{R}_n^{SOCP}-\sum_{t = 1}^n \bm{c}_t^\top \bm{x}_t\right]\notag \\
	= &~\mathbb E \left[\hat{R}_n^{SOCP}-\sum_{t = 1}^n \bm{c}_t^\top \hat{\bm{x}}_t^{LP}\right]
	+ \mathbb E \left[\sum_{t = 1}^n \bm{c}_t^\top \hat{\bm{x}}_t^{LP}-\sum_{t = 1}^n \bm{c}_t^\top \bm{x}_t\right]\\
	\ge &~ -O(\sqrt{n}) - O(\sqrt{n}) = -O(\sqrt{n})\notag.
\end{align}

For the constraint violation, we have
\begin{equation}
\begin{aligned}
 & ~{\mathbb E} \left[\left\|\left(\bm g\left({{\bm{x}}}\right)-\bm{b}\right)^+\right\|_2\right]	\\
 \le & ~{\mathbb E} \left[\left\|\left(\bm g\left({{\bm{x}}}\right)-\sum_{t=1}^n\tilde{\bm{A}}_t\bm x_t\right)^+\right\|_2\right]	
 + {\mathbb E} \left[\left\|\left(\sum_{t=1}^n\tilde{\bm{A}}_t\bm x_t-\bm{b}\right)^+\right\|_2\right]\\
 \le & ~{\mathbb E} \left[\left\|\bm g\left({{\bm{x}}}\right)-\sum_{t=1}^n\tilde{\bm{A}}_t\bm x_t\right\|_2\right]	
 + {\mathbb E} \left[\left\|\left(\sum_{t=1}^n\tilde{\bm{A}}_t\bm x_t-\bm{b}\right)^+\right\|_2\right]	\\
 \le & ~O(\sqrt n) + O(\sqrt n) = O(\sqrt n).
\end{aligned}
\end{equation}
Thus we obtain Theorem \ref{thm2}.
\end{proof}

\end{document}